\documentclass{amsart}
\usepackage{amssymb,amsmath,amsthm,graphics,amscd,amsfonts}
\usepackage{color}

\pagestyle{plain}
\theoremstyle{plain}
\newtheorem{theorem}{Theorem}[section]
\newtheorem{proposition}[theorem]{Proposition}
\newtheorem{corollary}[theorem]{Corollary}

\newtheorem{example}[theorem]{Example}
\newtheorem{definition}[theorem]{Definition}
\newtheorem{problem}[theorem]{Problem}

\newtheorem{remark}[theorem]{Remark}

\newcommand{\bfo}{{\bf o}}

\newcommand{\bfC}{{\mathbb C}}
\newcommand{\bfP}{{\mathbb P}}
\newcommand{\bfR}{{\mathbb R}}
\newcommand{\bfZ}{{\mathbb Z}}

\newcommand{\bfQ}{{\mathbb Q}}

\newcommand{\barg}{{\overline g}}

\newcommand{\barpartial}{{\overline \partial}}

\newcommand{\mapright}[1]{\smash{\mathop{   \hbox to 0.7cm{\rightarrowfill}}
  \limits^{#1}}}

\newcommand{\Ker}{{\rm Ker}}
\newcommand{\ArrowF}{{\overrightarrow{F}}}

\def\p{\partial}

\def\R{{\mathbb R}}

\begin{document}

\title{Some topics on Ricci solitons and self-similar solutions to mean curvature flow }
\author{Akito Futaki}
\address{Department of Mathematics, Tokyo Institute of Technology, 2-12-1,
O-okayama, Meguro, Tokyo 152-8551, Japan}
\email{futaki@math.titech.ac.jp}
\subjclass[2000]{Primary 53C55, Secondary 53C21, 55N91 }
\date{June 4, 2012 }
\keywords{Ricci flow, Ricci soliton, mean curvature flow, self-similar solution}

\begin{abstract} 
In this survey article, we discuss some topics on self-similar solutions 
to the Ricci flow and the mean curvature flow. Self-similar solutions to the Ricci flow 
is known as Ricci solitons.
In the first part of this paper we discuss a lower diameter bound for compact manifolds with
shrinking Ricci solitons. Such a bound can be obtained from an eigenvalue estimate for a twisted
Laplacian, called the Witten-Laplacian. In the second part we discuss self-similar solutions to the mean curvature flow on cone manifolds.
Many results have been obtained for solutions in $\bfR^n$ or $\bfC^n$. We see that many of them
extend to cone manifolds, and in particular results on $\bfC^n$ for special Lagrangians and self-shrinkers 
can be extended to toric Calabi-Yau cones. We also see that a similar lower diameter bound can be obtained for
self-shrinkers to the mean curvature flow as in the case of shrinking Ricci solitons.
\end{abstract}

\maketitle

\section{Introduction}

In this survey paper we discuss self-similar solutions to the two major geometric flows, the Ricci flow and the mean curvature flow.
The self-similar solutions to the Ricci flow are called the Ricci solitons. The self-similar solutions appear as rescaling limits of the 
singularities of the corresponding flows, see \cite{Ham93}, \cite{Sesum06CAG}, \cite{Cao} for the Ricci flow, and \cite{H} for the mean 
curvature flow.  
The two kinds of self-similar solutions have common aspects. A typical such aspect is that there is a common eigenvalue
 $-2\lambda$ for the twisted Laplacian $ \Delta_f = \Delta - \nabla f \cdot$, that is
 $$ \qquad \Delta_fu = \Delta u - g^{ij} \nabla_if\nabla_j u$$
 where 
$$ \mathrm{Ric}(g) - \gamma g + \nabla\nabla f = 0$$
in the case of Ricci flow (see Step 1 in the proof of Theorem  \ref{sol3}), and where 
$$ \vec{H} = - \lambda x^\perp$$
and
$$ f = 2\lambda \left(\frac{|x|^2}4 - \frac n{4\lambda}\right) $$
in the case of mean curvature flow $x : M^n \to \bfR^{n+p}$ (see Theorem \ref{th3}). Combining this with an eigenvalue
estimate for $\Delta_f$ given in Step 2 in the proof of Theorem \ref{sol3} we obtain lower diameter bounds both for the 
Ricci soliton (Theorem \ref{sol3}) 
and the immersed submanifold of the self-similar solution to the mean curvature flow(Theorem \ref{th4}). These two are treated in section 2 and section 6. 

In section 3 we study the mean curvature flow and its self-similar solutions on Riemannian cone manifolds. 
The self-similar solutions of the mean curvature flow have been studied for immersions into $\bfR^{n+p}$ because 
we need to have position vectors and their orthogonal projection to define self-similar solutions. 
The idea of \cite{FHY12}, on which this section is based, is that there are natural position vectors and orthogonal projections for 
immersions into Riemannian cone manifolds. 
We propose
to study the self-similar solutions of the mean curvature flow into Riemannian cone manifolds 
because many earlier works extend to the cone situation. As a typical such result 
we see that
the self-similar solutions are obtained as the limit of parabolic rescalings of the type $\mathrm{I}_c$ singularity,
extending an earlier work of Huisken \cite{H}. 

After a brief introduction to toric Sasaki-Einstein manifolds in section 4,  
we give in section 5 a construction of special Lagrangian submanifolds in toric Calabi-Yau cones,
extending an earlier example  in $\bfC^n$ by Harvey-Lawson \cite{HarveyLawson82Acta}.
This section is based on \cite{FHY12}. 
Yamamoto \cite{Yamamoto} has further constructions of compact Lagrangian self-shrinkers. 

 In section 7 
eternal solutions to the Ricci flow are constructed on certain line bundles over toric Fano manifolds. 
This section is based on \cite{FutakiWang11}.

\vspace{3mm}

\noindent{\bf Acknowledgement}. This paper is partly based on the author's notes of the talk delivered at 
the Modern Mathematics Seminar Series of 
Mathematical Sciences Center of Tsinghua University in September 2011,
and the later discussions with H.-Z. Li and X.-D.Li. 
The author would like to express his gratitude to Professor S.-T. Yau and Prof. Y.-S. Poon for
the invitation to the MSC and hospitality. 

\section{Ricci solitons}

A complete Riemannian metric $g$ on a smooth manifold $M$ is called
a {\it Ricci soliton} if there is a $\gamma \in \bfR$ and a vector field $X$ such that
$$ 2\operatorname{Ric}(g) - 2\gamma g + \mathcal L_X g = 0.$$
If $X = 0$ then $\operatorname{Ric}(g) = \gamma g$, i.e. $g$ is Einstein.
In this case we say $g$ is {\it trivial}.
We say that a nontrivial soliton
is {\it expanding}, {\it steady} or {\it shrinking} according as 
$\gamma < 0$, $\gamma = 0$ or $\gamma > 0$.

If $X = \operatorname{grad} f$ for some $f \in C^\infty(M)$ then 
$$ \operatorname{Ric}(g) - \gamma g + \nabla\nabla f = 0.$$
In this case $g$ is called a {\it gradient soliton}.

Given a Ricci soliton, let $Y_t$ be the time dependent vector field
$$ Y_t := - \frac 1{2\gamma t} X$$
and let $\varphi_t$ be the flow generated by $Y_t$.
If we set
$$ g(t) = -2\gamma t\varphi_t^{\ast}g$$
then $g(t)$ satisfies the Ricci flow equation
$$ \frac{\partial g(t)}{\partial t} = - 2\operatorname{Ric}(g(t)).$$
A Ricci soliton is a self-similar solution to the Ricci flow equation since it is obtained
as a rescaling limit of a singularity (\cite{Ham93}, \cite{Sesum06CAG}, \cite{Cao}).

\begin{theorem}[Perelman \cite{Perelman}, see also \cite{derdzinski}]\label{sol1}
Any soliton on a compact manifold is a gradient soliton.
\end{theorem}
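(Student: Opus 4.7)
The plan is to use Perelman's $\mathcal{W}$-entropy to extract the potential function $f$ in the shrinking case, with analogous functionals handling the steady and expanding cases.

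Assume first that $g$ is shrinking, so $\gamma > 0$. For $\tau > 0$, introduce
$$\mathcal{W}(g, f, \tau) = \int_M \bigl[\tau(R + |\nabla f|^2) + f - n\bigr] (4\pi\tau)^{-n/2} e^{-f} dV,$$
subject to $\int_M (4\pi\tau)^{-n/2} e^{-f} dV = 1$, and set $\mu(g,\tau) = \inf_f \mathcal{W}(g,f,\tau)$. The substitution $u = e^{-f/2}$ turns this into minimizing a coercive quadratic-type functional under an $L^2$ constraint, so a smooth positive minimizer $u$, equivalently a smooth $f$, exists on the compact manifold $M$.

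Next, combine two facts. First, $\mu(g,\tau)$ is invariant under diffeomorphisms and transforms homogeneously under rescaling of $g$ (with $\tau$ adjusted accordingly); hence if $g(t) = -2\gamma t\, \varphi_t^* g$ is the Ricci flow generated by the soliton, the function $t \mapsto \mu(g(t), \tau(t))$ for a suitable choice of $\tau(t)$ is constant. Second, Perelman's monotonicity formula states that along the Ricci flow coupled to the conjugate heat equation for $f$,
$$\frac{d}{dt}\mu(g(t), \tau(t)) = 2\tau \int_M \Bigl|\operatorname{Ric} + \nabla\nabla f - \frac{g}{2\tau}\Bigr|^2 (4\pi\tau)^{-n/2} e^{-f} dV \ge 0.$$
Constancy of the left-hand side forces the integrand to vanish, so the minimizer $f$ satisfies $\operatorname{Ric} + \nabla\nabla f - \frac{g}{2\tau} = 0$; choosing $\tau = 1/(2\gamma)$ gives the gradient shrinking soliton equation on the original metric.

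The main obstacle is establishing Perelman's monotonicity formula, which rests on a delicate Bochner--type identity and the precise evolution of $f$ under the conjugate heat equation; existence of the minimizer and the invariance properties of $\mu$ are, by comparison, routine. The steady case ($\gamma = 0$) is handled identically using Perelman's $\lambda$-invariant $\lambda(g) = \inf_f \int_M (R + |\nabla f|^2) e^{-f} dV$ subject to $\int_M e^{-f} dV = 1$, which in fact forces a compact steady soliton to be Einstein. The expanding case ($\gamma < 0$) is treated with an analogous expanding entropy, completing all three cases.
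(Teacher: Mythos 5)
The paper does not prove this theorem at all: it is quoted as a known result of Perelman (with Derdzinski's note cited as an alternative reference), so there is no in-paper argument to compare against. Your proposal reconstructs Perelman's original proof, and its overall architecture is correct: existence of a smooth minimizer for $\mathcal{W}$ via the substitution $u=e^{-f/2}$ (a log-Sobolev--type variational problem, due essentially to Rothaus), constancy of $\mu(g(t),\tau(t))$ along the self-similar flow $g(t)=-2\gamma t\,\varphi_t^{*}g$ with $\tau(t)=-t$ by diffeomorphism invariance and the scaling law $\mu(cg,c\tau)=\mu(g,\tau)$, and the rigidity case of the entropy monotonicity forcing $\operatorname{Ric}+\nabla\nabla f=\tfrac{g}{2\tau}$. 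Two points deserve more care. First, $\mu(g(t),\tau(t))$ need not be differentiable in $t$, so the displayed identity $\tfrac{d}{dt}\mu=2\tau\int|\operatorname{Ric}+\nabla\nabla f-\tfrac{g}{2\tau}|^{2}\,(4\pi\tau)^{-n/2}e^{-f}dV$ is not literally available; the correct formulation is to take the minimizer $f$ at a later time $t_1$, solve the conjugate heat equation backwards to $t_0<t_1$, and use Perelman's monotonicity of $\mathcal{W}$ for that solution to get $\mu(g(t_0),\tau(t_0))\le\mathcal{W}(g(t_0),f(t_0),\tau(t_0))\le\mathcal{W}(g(t_1),f(t_1),\tau(t_1))=\mu(g(t_1),\tau(t_1))$, whence equality and vanishing of the integrand. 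Second, it is worth saying explicitly that the argument produces \emph{some} potential $f$ with $\operatorname{Ric}+\nabla\nabla f=\gamma g$; comparing with the given soliton equation shows only that $X-\operatorname{grad}f$ is Killing, which is exactly what the theorem asserts (the metric admits a gradient soliton structure), not that $X$ itself is a gradient. Your treatment of the steady and expanding cases via the $\mathcal{F}$-functional and the expanding entropy is the standard route and correctly yields that those solitons are in fact Einstein, consistent with Theorem \ref{sol2}.
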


\begin{theorem}[Hamilton \cite{Ham93}, Ivey \cite{Ivey}, see also \cite{Cao06}]\label{sol2}
Any nontrivial gradient Ricci soliton on a compact manifold
is shrinking with $\dim M \ge 4$.
\end{theorem}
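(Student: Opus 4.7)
My plan is to split the theorem into two parts, handled by quite different methods: (i) on a closed manifold the soliton constant $\gamma$ must be strictly positive, and (ii) even in the shrinking case, compact nontrivial gradient solitons do not exist in dimensions two and three.

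For part (i) the starting point is the gradient soliton equation
$$\operatorname{Ric}(g) + \nabla\nabla f = \gamma g.$$
Tracing this and applying the twice-contracted second Bianchi identity yields the three standard consequences
$$\Delta f + R = n\gamma, \qquad \nabla R = 2\operatorname{Ric}(\nabla f,\cdot), \qquad \Delta_f R = 2\gamma R - 2|\operatorname{Ric}|^2,$$
where $\Delta_f = \Delta - \nabla f\cdot$ is the Witten Laplacian used throughout the paper. Evaluating the third identity at a point where $R$ attains its minimum on the compact manifold $M$ and using the Cauchy--Schwarz bound $|\operatorname{Ric}|^2 \ge R^2/n$ gives $R_{\min}(\gamma - R_{\min}/n) \ge 0$. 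A short case analysis shows that when $\gamma \le 0$ this, together with $\int_M R\,dv = n\gamma\operatorname{vol}(M)$ from integrating the trace identity, forces $R \equiv n\gamma$. Then $\Delta f \equiv 0$ on a closed manifold makes $f$ constant, so the soliton equation collapses to $\operatorname{Ric} = \gamma g$, and the soliton is Einstein, i.e.\ trivial. Hence any nontrivial compact gradient soliton must be shrinking.

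For part (ii) I would rule out nontrivial compact shrinking gradient solitons in dimensions $2$ and $3$. In dimension $2$, Hamilton's classification in \cite{Ham93} shows that every compact two-dimensional Ricci soliton has constant curvature and is therefore Einstein. In dimension $3$, following Ivey \cite{Ivey} (see also \cite{Cao06}), one exploits that the Weyl tensor vanishes identically so that the Riemann tensor is determined by $\operatorname{Ric}$, and applies the Hamilton--Ivey curvature pinching estimate along the induced Ricci flow. The pinching forces the sectional curvatures of the shrinking soliton to be strictly positive and then, combined with the soliton identities above, shows that the traceless part of $\operatorname{Ric}$ vanishes; thus the soliton is Einstein and again trivial.

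The main obstacle is clearly the three-dimensional case. The maximum-principle argument on the scalar curvature that settles the $\gamma\le 0$ regime is essentially a one-line calculation once the identity $\Delta_f R = 2\gamma R - 2|\operatorname{Ric}|^2$ is in hand. In contrast, ruling out nontrivial compact shrinkers in dimension $3$ requires the Hamilton--Ivey pinching estimate, a substantially deeper ingredient specific to three-dimensional Ricci flow, in order to upgrade control on $R$ alone to genuine Einstein rigidity.
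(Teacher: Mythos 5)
The paper does not actually prove Theorem \ref{sol2}: it is quoted as a known result of Hamilton and Ivey, with no argument supplied, so there is no internal proof to compare against. Your proposal is nevertheless a correct and standard route to the statement. Part (i) is complete as written: the identities $\Delta f + R = n\gamma$, $\nabla R = 2\operatorname{Ric}(\nabla f,\cdot)$ and $\Delta_f R = 2\gamma R - 2|\operatorname{Ric}|^2$ are the standard soliton identities, the minimum-principle step gives $R_{\min}(\gamma - R_{\min}/n)\ge 0$, and in both cases $\gamma=0$ and $\gamma<0$ this yields $R\ge n\gamma$ pointwise; together with $\int_M R\,dV = n\gamma\,\mathrm{vol}(M)$ this forces $R\equiv n\gamma$, hence $f$ constant and the soliton Einstein. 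Part (ii) necessarily leans on the cited literature: the two-dimensional case is Hamilton's classification, and the three-dimensional case is genuinely deep and is exactly Ivey's theorem. One small imprecision there: what is usually called the ``Hamilton--Ivey pinching estimate'' is the lower bound on $R$ in terms of the most negative sectional curvature for three-dimensional Ricci flow, whereas Ivey's compact soliton theorem is proved by a monotone pinching-ratio argument for the eigenvalues of the curvature operator via Hamilton's tensor maximum principle, with self-similarity forcing equality in the monotonicity and hence constant positive curvature. The substance of your outline is right, but you should cite Ivey's argument for what it is rather than the later Hamilton--Ivey estimate. You might also note, for completeness, that dimension one is vacuous since every metric there is flat.
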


All known examples of compact Ricci solitons are K\"ahler Ricci solitons:\\
1) Koiso \cite{Koiso90} and Cao \cite{Cao96} : $\bfP^1$-bundles over products of $\bfP^n$'s;\\
2) Wang-Zhu \cite{Wang-Zhu}: toric Fano manifolds; \\
3) Podesta-Spiro \cite{Podesta-Spiro}: homogeneous toric bundles.\\
K\"ahler-Ricci solitons are K\"ahler-Einstein metrics exactly when the invariant in \cite{futaki83.1} vanishes.

For the constructions of solitons on non-compact manifolds, see for example \cite{DancerWang11} and \cite{FutakiWang11}. 

\begin{problem} Does there exist a compact non-K\"ahler nontrivial gradient soliton ?
\end{problem}

The first topic in this survey is about the following lower diameter bound 
for compact gradient shrinking Ricci solitons.

\begin{theorem}[Futaki-Li-Li \cite{FLL}]\label{sol3} Let $M^n$ be a compact manifold with $\dim M = n \ge 4$.
If $g$ is a nontrivial gradient shrinking soliton on $M$ with
$$ 2\operatorname{Ric}(g) - 2\gamma g + \mathcal L_X g = 0$$
then
$$ d_g \ge \frac{2(\sqrt{2} -1)}{\sqrt{\gamma}} \pi$$
where $d_g$ is the diameter of $(M,g)$.
\end{theorem}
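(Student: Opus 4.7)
The plan is to produce $2\gamma$ as an eigenvalue of the Witten--Laplacian $\Delta_f = \Delta - \nabla f\cdot\nabla$ acting on a canonical non-constant function, and then to pit this against a sharp lower bound for the first positive eigenvalue of $-\Delta_f$ forced by the positivity of the Bakry--\'Emery Ricci tensor. Solving the resulting inequality for the diameter will yield the claim.

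\medskip
\noindent\textbf{Step 1 (identifying $2\gamma$ as an eigenvalue).} By Theorem~\ref{sol1} the soliton may be assumed gradient, so $X=\operatorname{grad} f$ and the equation reads $\operatorname{Ric}(g)-\gamma g+\nabla\nabla f=0$. Tracing gives $R+\Delta f=n\gamma$, while applying the contracted second Bianchi identity to the soliton equation yields Perelman's conservation law $R+|\nabla f|^2=2\gamma f+C_0$ for some constant $C_0$. Subtracting the two,
$$\Delta_f f = \Delta f - |\nabla f|^2 = (n\gamma - C_0) - 2\gamma f,$$
so after shifting $f$ by the constant $(n\gamma-C_0)/(2\gamma)$ the normalised potential $\widetilde f$ satisfies $\Delta_f \widetilde f = -2\gamma\,\widetilde f$. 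Nontriviality of the soliton forces $\widetilde f$ to be non-constant (otherwise $\operatorname{Ric}(g)=\gamma g$ is trivial), so $2\gamma$ is a positive eigenvalue of $-\Delta_f$; the first positive eigenvalue $\mu_1$ therefore obeys $\mu_1 \le 2\gamma$.

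\medskip
\noindent\textbf{Step 2 (diameter-sensitive lower bound on $\mu_1$).} The soliton equation implies $\operatorname{Ric}_f := \operatorname{Ric}(g)+\nabla\nabla f \equiv \gamma g > 0$, so the Bakry--\'Emery tensor is positive of exactly size $\gamma$. The plan is to establish a quantitative lower bound $\mu_1 \ge \Phi(\gamma, d_g)$ by adapting a modulus-of-continuity / gradient-estimate argument in the spirit of Li--Yau, Zhong--Yang and Andrews--Ni to the weighted setting with measure $e^{-f}\,dv_g$: one compares an eigenfunction of $-\Delta_f$ with a one-dimensional model along a minimising geodesic joining its extrema, the positivity $\operatorname{Ric}_f \ge \gamma g$ playing the role of the usual Ricci bound, and then optimises over a one-parameter family of barrier profiles. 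Pairing $\Phi(\gamma,d_g) \le \mu_1 \le 2\gamma$ with the conclusion of Step~1 and inverting $\Phi$ will produce the stated bound $d_g \ge 2(\sqrt{2}-1)\pi/\sqrt{\gamma}$, the unusual constant $2(\sqrt{2}-1)$ emerging as the critical value of that optimisation.

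\medskip
The principal obstacle is Step~2. The cheap Lichnerowicz bound $\mu_1 \ge \gamma$ that is immediate from $\operatorname{Ric}_f \ge \gamma g$ is compatible with Step~1 but carries no information about $d_g$, so a genuine refinement is needed. Because the ordinary Ricci tensor of a shrinking soliton need not be bounded below, one cannot simply quote a Myers-type Ricci argument; the drift must be absorbed inside the weighted comparison itself, and it is the optimisation within this comparison that produces the specific factor $2(\sqrt{2}-1)$ rather than the cruder constant $\pi$ from a plain Zhong--Yang estimate or $\pi\sqrt{2/3}$ from a naive combination of Lichnerowicz with a diameter term.
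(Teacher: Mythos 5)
Your Step 1 is correct and is exactly the paper's Step 1: tracing the soliton equation, invoking Hamilton's identity $R+|\nabla f|^2-2\gamma f=\mathrm{const}$, and shifting $f$ by a constant (the paper normalises instead by $\int_M f e^{-f}dV_g=0$, which pins down the same shift) to get $\Delta_f f=-2\gamma f$, whence $\mu_1\le 2\gamma$. Your overall architecture --- pit this upper bound against a diameter-sensitive lower bound for $\mu_1$ under $\operatorname{Ric}_f\ge\gamma g$ --- is also the paper's. But Step 2, which you yourself flag as ``the principal obstacle,'' is left as a plan rather than carried out, and it is precisely the content of the theorem: without it you cannot produce the constant $2(\sqrt2-1)$, and the estimates you would get for free (Lichnerowicz $\mu_1\ge\gamma$, or a weighted Zhong--Yang bound $\mu_1\ge\pi^2/d^2$ giving only $d\ge\pi/\sqrt{2\gamma}$) are strictly weaker. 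So as written the proposal has a genuine gap.

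What is missing, concretely, is the inequality
$$\mu_1\ \ge\ \sup_{s\in(0,1)}\Bigl\{4s(1-s)\frac{\pi^2}{d^2}+s\gamma\Bigr\},$$
which the paper obtains in two pieces. First, a comparison theorem of Chen--Wang / Bakry--Qian / Andrews--Ni type (Theorem \ref{sol10}) reduces the problem to the first nonzero Neumann eigenvalue $\lambda_1(L)$ of the one-dimensional model $L=\frac{d^2}{dx^2}-\gamma x\frac{d}{dx}$ on $(-d/2,d/2)$; this is the step where the drift is absorbed, and it can be quoted rather than reproved. Second --- and this is the new elementary computation you would need to supply (Theorem \ref{sol11}) --- one differentiates the first Neumann eigenfunction to obtain a positive first Dirichlet eigenfunction $f$ of $L$, multiplies its equation by $f^{a-1}$ for $a>1$, integrates by parts using $f(\pm d/2)=0$, substitutes $u=f^{a/2}$ and $s=1-1/a$ to arrive at $4s(1-s)\int|u'|^2=(\lambda-\gamma s)\int u^2$, and then applies the one-dimensional Poincar\'e inequality $\int|u'|^2/\int u^2\ge\pi^2/d^2$. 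Feeding this into $\mu_1\le 2\gamma$ gives $\gamma\ge\frac{4s(1-s)}{2-s}\frac{\pi^2}{d^2}$, and maximising $\frac{4s(1-s)}{2-s}$ at $s=2-\sqrt2$ yields $12-8\sqrt2=(2(\sqrt2-1))^2$, which is where the ``unusual constant'' actually comes from. Until you execute this (or an equivalent) computation, Step 2 is a statement of intent, not a proof.
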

As an immediate corollary we have
\begin{corollary}\label{sol4}If a compact gradient shrinking soliton has
\begin{equation}\label{sol5}
d_g < \frac{2(\sqrt{2} -1)}{\sqrt{\gamma}}\pi 
\end{equation}
then $g$ is a trivial soliton (i.e. Einstein).
\end{corollary}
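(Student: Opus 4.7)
Corollary \ref{sol4} is the contrapositive of Theorem \ref{sol3}: under hypothesis (\ref{sol5}), Theorem \ref{sol3} would be violated if $g$ were nontrivial, so $g$ must be Einstein. The substantive content is therefore Theorem \ref{sol3}, which I would prove in the two-step scheme flagged in the introduction.

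\emph{Step 1: identify $-2\gamma$ as an eigenvalue of $\Delta_f$.} By Theorem \ref{sol1} the soliton is a gradient soliton, so
$$\operatorname{Ric}(g) - \gamma g + \nabla\nabla f = 0.$$
Taking the trace gives $\Delta f = n\gamma - R$. Taking the divergence of the soliton equation and using the contracted second Bianchi identity produces the standard integrated identity $|\nabla f|^2 + R - 2\gamma f = C$, and after replacing $f$ by $f + \text{const}$ one may take $C = 0$. Then
$$\Delta_f f = \Delta f - |\nabla f|^2 = n\gamma - 2\gamma f,$$
so $u := f - n/2$ satisfies $\Delta_f u = -2\gamma u$. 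Because the soliton is nontrivial (and by Theorem \ref{sol2} must be shrinking with $n \ge 4$, so the hypotheses are consistent), $f$, and hence $u$, is nonconstant; thus $-2\gamma$ is genuinely an eigenvalue of $\Delta_f$.

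\emph{Step 2: eigenvalue estimate for $\Delta_f$ giving a diameter lower bound.} On a gradient shrinking soliton the Bakry--Emery Ricci tensor satisfies $\operatorname{Ric}_f := \operatorname{Ric}(g) + \nabla\nabla f = \gamma g$ identically. Under such a lower bound $\operatorname{Ric}_f \ge K g$ with $K > 0$, I would run a Li--Yau-type $P$-function argument (maximum principle applied to $|\nabla u|^2 + \psi(u)$ for a carefully chosen auxiliary function $\psi$) to produce, for an eigenfunction $u$ of $-\Delta_f$ with eigenvalue $\mu > 0$, a pointwise gradient bound
$$|\nabla u|^2 \le F_{\mu,K}(u)$$
along any minimizing geodesic from $\min u$ to $\max u$. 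Integrating along that geodesic turns this bound on $|\nabla u|$ into a lower bound on $d_g$ in terms of $\operatorname{osc}(u)$. Specializing to the soliton values $\mu = 2\gamma$ and $K = \gamma$ and carrying out the resulting one-variable ODE comparison yields the stated constant $d_g \ge 2(\sqrt{2}-1)\pi/\sqrt{\gamma}$.

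The main obstacle is this optimal eigenvalue/gradient estimate in Step 2: the constant $2(\sqrt 2 - 1)\pi$ is not generic but emerges only after choosing the sharp auxiliary function $\psi$ in the $P$-function and then performing the precise ODE comparison that converts the pointwise bound into a diameter bound. All other ingredients---the reduction to the gradient form, the trace/Bianchi manipulation, the identification of the eigenfunction, and the conversion $\operatorname{osc}(u) \le d_g \max|\nabla u|$---are essentially bookkeeping once the sharp Witten-Laplacian estimate under a positive Bakry--Emery Ricci lower bound is in hand.
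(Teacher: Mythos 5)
Your reading of the corollary itself is exactly the paper's: Corollary \ref{sol4} is stated as an immediate consequence (the contrapositive) of Theorem \ref{sol3}, and the paper offers no further argument for it. Your Step 1 also matches the paper's Step 1 in substance: the paper normalizes $f$ by $\int_M f e^{-f}dV_g=0$ and records $\Delta_f f+2\gamma f=0$, and your trace/Bianchi derivation with the shift $u=f-n/2$ is the same computation in slightly different clothing.

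Where you genuinely diverge is Step 2, and there the divergence matters. The paper does not run a Li--Yau/$P$-function gradient estimate. It quotes the one-dimensional comparison theorem of Chen--Wang, Bakry--Qian and Andrews--Ni (Theorem \ref{sol10}): under $\operatorname{Ric}+\nabla\nabla\phi\ge Kg$, the first nonzero eigenvalue of $\Delta_\phi$ is bounded below by the first Neumann eigenvalue $\lambda_1(L)$ of the model operator $L=\frac{d^2}{dx^2}-Kx\frac{d}{dx}$ on $(-d/2,d/2)$. The sharp constant is then extracted by a completely elementary estimate for this ODE model (Theorem \ref{sol11}): differentiate the Neumann eigenfunction to get a positive Dirichlet eigenfunction $f$, multiply its equation by $f^{a-1}$, integrate by parts, substitute $u=f^{a/2}$ and $s=1-1/a$, and apply the one-dimensional Poincar\'e inequality $\int|u'|^2/\int u^2\ge\pi^2/d^2$. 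This produces the whole one-parameter family $\lambda\ge 4s(1-s)\pi^2/d^2+sK$ of inequality (\ref{sol7}), and optimizing $4s(1-s)/(2-s)$ at $s=2-\sqrt2$ gives $d_g\ge 2(\sqrt2-1)\pi/\sqrt\gamma$. Your proposed maximum-principle route is essentially the method behind the earlier Futaki--Sano bound, which the paper's own remark says follows from inequalities strictly weaker than (\ref{sol7}); you yourself flag the choice of the sharp auxiliary function $\psi$ as ``the main obstacle'' without exhibiting it, so the decisive quantitative step is asserted rather than proved in your sketch. The point of the paper's argument is that the sharpness becomes cheap once one passes to the one-dimensional Neumann model, rather than being coaxed out of a pointwise gradient bound.
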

It is interesting to compare Corollary \ref{sol4} with the following theorem of Meyers. 
When $g$ is Einstein with $\operatorname{Ric} = \gamma g$, i.e. $X=0$, and with $\gamma >0$ then
Meyers' theorem says
$$ d_g \le \sqrt{\frac{n-1}{\gamma}} \pi.$$
\begin{problem} : Does there exist an example of Einstein manifold satisfying ${\mathrm (\ref{sol5})}$?
\end{problem}
The proof of Theorem \ref{sol3} is given as follows.
\begin{proof}[Proof of Theorem \ref{sol3}]\ \\
Step 1 (cf. \cite{FS}) :\\ 
Suppose we have a gradient Ricci soliton
$$\operatorname{Ric}(g) - \gamma g + \nabla\nabla f = 0.$$
Define $\Delta_f$ by 
$$ \Delta_f u = \Delta u - \nabla f \cdot \nabla u\ (= g^{ij}\nabla_i\nabla_j u - \nabla^if \nabla_iu).$$
We normalize $f$ so that
$$ \int_M f e^{-f}dV_g = 0.$$
Then $-2\gamma$ is an eigenvalue of $\Delta_f$. In fact
\begin{equation}\label{sol6}
\Delta_f f + 2\gamma f = 0.
\end{equation}

\vspace{3mm}
\noindent
Step 2 (\cite{FLL}) :\\
If $\Delta_f u + \lambda u = 0$ for some nonzero $u \in C^{\infty}(M)$. Then
\begin{equation}\label{sol7}
\lambda \ge \sup_{s \in (0,1)} \{ 4s(1-s)\frac{\pi^2}{d^2} + s \gamma\}
\end{equation}
This Step 2 is the essential part, and its proof is explained later.\\

\vspace{3mm}
\noindent
Step 3: \\
By Step 1 and Step 2 we have for any $s \in (0,1)$ 
$$ 2\gamma \ge 4s(1-s)\frac{\pi^2}{d^2} + s \gamma,$$
and hence
\begin{equation}\label{sol8}
\gamma \ge \frac{4s(1-s)}{2-s}\frac{\pi^2}{d^2}. 
\end{equation}
The right hand side of (\ref{sol8}) takes maximum
\begin{equation}\label{sol9}
 \frac{4s(1-s)}{2-s} \le 12 - 8\sqrt{2}
\end{equation}
at $s = 2 - \sqrt{2} \in (0,1)$. From (\ref{sol8}) and (\ref{sol9}) we get
\begin{eqnarray*}
d_g &\ge& \sqrt{\frac{12 -8\sqrt{2}}{\gamma}}\pi\\
&=& \frac{2(\sqrt{2} -1)}{\sqrt{\gamma}} \pi
\end{eqnarray*}
This completes the proof of Theorem \ref{sol3}. 
\end{proof}
Now we turn to the proof of Step 2 in the proof above. We apply the following result.
\begin{theorem}[\cite{CW1}, \cite{CW2}, \cite{BQ1}, \cite{And-Ni}]\label{sol10}
Let $(M,g)$ be a compact Riemannian manifold, and $\phi$ be a $C^2$ function on $M$. 
Suppose that
$$ {\operatorname Ric}(g) + \nabla\nabla\phi \ge Kg$$
for some $K \in \bfR$. Then the first nonzero eigenvalue $\lambda_1$ for the Witten-Laplacian
$$ \Delta_\phi = \Delta - \nabla^i\phi\cdot\nabla_i$$
satisfies
$$ \lambda_1 \ge \lambda_1(L)$$
where $\lambda_1(L)$ is the first nonzero Neumann eigenvalue of
$$ L = \frac{d^2}{dx^2} - Kx\frac{d}{dx}$$
on $(-d/2, d/2)$.
\end{theorem}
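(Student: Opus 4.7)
The strategy is to combine the Bakry--\'Emery Bochner formula with a one-dimensional Sturm--Liouville comparison, extending the classical Zhong--Yang and Li--Yau gradient-estimate template to the weighted Laplacian setting.

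First, I would apply the weighted Bochner identity to a first eigenfunction $u$ with $\Delta_\phi u = -\lambda_1 u$, normalized so that $\max u = 1$ and $\min u \ge -1$. The identity reads
$$\frac{1}{2}\Delta_\phi |\nabla u|^2 = |\operatorname{Hess} u|^2 + \langle \nabla u, \nabla \Delta_\phi u\rangle + (\operatorname{Ric} + \nabla\nabla\phi)(\nabla u, \nabla u),$$
which, under the curvature hypothesis $\operatorname{Ric} + \nabla\nabla\phi \ge K g$ together with the eigenvalue equation, simplifies to
$$\frac{1}{2}\Delta_\phi |\nabla u|^2 \ge |\operatorname{Hess} u|^2 + (K - \lambda_1)|\nabla u|^2.$$

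Second, I would introduce a one-dimensional model function $\psi$ on $[-1,1]$ constructed so that the ODE it satisfies is the Sturm--Liouville reduction of the Neumann eigenvalue problem $Lv + \lambda v = 0$. Forming the P-function $P = |\nabla u|^2 - \psi(u)^2$ and examining $\Delta_\phi P$ at an interior maximum of $P$ (invoking the Cauchy--Schwarz bound $|\operatorname{Hess} u|^2 \ge (u_{11})^2$ in the direction of $\nabla u$ together with the Bochner inequality above) forces $\max P \le 0$. This yields the pointwise gradient estimate $|\nabla u|^2 \le \psi(u)^2$.

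Third, integrating this estimate along a minimizing geodesic $\gamma$ joining a minimum point and a maximum point of $u$ gives
$$d_g \ge \operatorname{length}(\gamma) \ge \int_{\min u}^{\max u} \frac{du}{\psi(u)},$$
where the right-hand integral is exactly the length of the symmetric interval on which $\psi$ is an eigenfunction of $L$ with eigenvalue $\lambda_1$. Since $\lambda_1(L)$ is monotone decreasing in the length of the defining interval, this comparison yields $\lambda_1 \ge \lambda_1(L)$ for the problem on $(-d_g/2, d_g/2)$.

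The principal obstacle is the construction of $\psi$ and the closure of the P-function argument: the model ODE must be matched precisely to the Bochner inequality, and the degenerate cases in which $\min u$ strictly exceeds $-1$ or in which $\nabla u$ vanishes along the geodesic require a careful interpolation. A technically cleaner alternative, due to Andrews--Ni, is the two-point modulus-of-continuity method: one couples two solutions of $(\partial_t - \Delta_\phi)$ on $M \times M$ through a function of the Riemannian distance and reduces directly to the one-dimensional Neumann problem for $L$, bypassing the pointwise gradient estimate entirely.
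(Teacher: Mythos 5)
First, note that the paper does not actually prove Theorem \ref{sol10}: it is imported from \cite{CW1}, \cite{CW2}, \cite{BQ1} and \cite{And-Ni}, and the text explicitly says ``if this theorem is granted\dots''. So there is no in-paper argument to compare yours against; what follows measures your plan against the proofs in those references.

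Your outline correctly identifies the two known routes: the gradient-estimate/$P$-function method (Kr\"oger, Bakry--Qian, here in the Bakry--\'Emery setting with the dimensional term dropped, which is consistent with the model operator $L$ carrying only the drift $-Kx$), and the Andrews--Ni two-point modulus-of-continuity method. The weighted Bochner identity and its consequence $\tfrac12\Delta_\phi|\nabla u|^2 \ge |\operatorname{Hess}u|^2+(K-\lambda_1)|\nabla u|^2$ are stated correctly. As a proof, however, the proposal has two substantive holes, both of which you flag but neither of which is routine. (i) The construction of $\psi$ and the closure of the maximum principle for $P=|\nabla u|^2-\psi(u)^2$ is where the comparison actually lives: one must take $\psi=v'\circ v^{-1}$ for a Neumann eigenfunction $v$ of $L$, check that the resulting Riccati-type ODE is exactly what the first-order conditions $\nabla P=0$ at an interior maximum feed into the Bochner inequality, and deal with the endpoints of the range of $u$ where $\psi$ vanishes. (ii) More seriously, your final step invokes only monotonicity of $\lambda_1(L)$ in the \emph{length} of the interval, but for $K\ne 0$ the operator $L=\frac{d^2}{dx^2}-Kx\frac{d}{dx}$ is not translation invariant: integrating the gradient estimate along the geodesic produces a model interval $[a,b]$ with $b-a\le d$ that need not be symmetric about the origin (precisely the case $\min u>-\max u$), and one must additionally prove that among intervals of a fixed length the symmetric one minimizes the first nonzero Neumann eigenvalue of $L$. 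This ``shifted interval'' lemma is a genuine step in Bakry--Qian and is not implied by anything you wrote. The Andrews--Ni route you mention in closing absorbs both difficulties at once (odd data is preserved by the drift $-Kx$, so the symmetric model appears automatically); if you intend to write a complete proof, that is the version to execute.
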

If this theorem is granted, the proof of Step 2 is obtained from the following.
\begin{theorem}[\cite{FLL}]\label{sol11}  Under the notations as above we have
$$ \lambda_1(L) \ge \sup_{s\in (0,1)} \{4s(1-s)\frac{\pi^2}{d^2} + sK\}.$$
\end{theorem}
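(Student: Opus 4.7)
The plan is to prove, for each fixed $s \in (0,1)$, the lower bound $\lambda_1(L) \ge 4s(1-s)\pi^2/d^2 + sK$; taking the supremum over $s$ then yields the theorem. Let $u$ be a first nonzero Neumann eigenfunction of $L$ on $[-d/2,d/2]$ with eigenvalue $\mu := \lambda_1(L)$, normalized by $\int u\, w\, dx = 0$ where $w(x) = e^{-Kx^2/2}$. By the Rayleigh identity $\mu \int u^2 w\,dx = \int (u')^2 w\,dx$, it suffices to establish the weighted Poincar\'e inequality
$$ \int_{-d/2}^{d/2} (u')^2 w\,dx \;\ge\; \Big(sK + 4s(1-s)\tfrac{\pi^2}{d^2}\Big) \int_{-d/2}^{d/2} u^2 w\,dx. $$

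My strategy blends a \emph{curvature} estimate with a \emph{diameter} estimate. The curvature ingredient: differentiating the eigenvalue equation $u''-Kxu'+\mu u=0$ shows that $v := u'$ satisfies $Lv + (\mu-K)v = 0$ with Dirichlet boundary $v(\pm d/2)=0$, so $\mu - K$ is a nonnegative Dirichlet eigenvalue of $L$; in particular $\mu \ge K$ (the limiting case $s \to 1$). The diameter ingredient: the classical 1-D Poincar\'e inequality $\int \varphi'^2\,dx \ge (\pi/d)^2 \int \varphi^2\,dx$ for $\varphi$ of zero mean on $[-d/2,d/2]$, which handles the unweighted case $K=0$ with sharp constant $\pi^2/d^2$. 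These are combined through a Sturm--Liouville comparison: one seeks a positive solution $\phi_s$ on $[-d/2,d/2]$ of $L\phi_s + \lambda_s \phi_s = 0$ with $\lambda_s := sK + 4s(1-s)\pi^2/d^2$, and uses the Riccati substitution $b_s := -\phi_s'/\phi_s$ to obtain the identity
$$ \int (u')^2 w\,dx - \lambda_s \int u^2 w\,dx \;=\; \int (u' + b_s u)^2 w\,dx \;-\; \big[b_s\, u^2 w\big]_{-d/2}^{d/2}, $$
whose right-hand side is non-negative once the boundary term is controlled. The specific factor $4s(1-s)$ appears as the signature of an AM--GM / completing-the-square optimization that balances the $sK$ curvature contribution against the $(1-s)\pi^2/d^2$ diameter contribution.

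The main obstacle is the Sturm--Liouville step: since $L$ has no elementary eigenfunctions on a bounded interval, producing $\phi_s$ cannot be done in closed form. One must either reduce the weighted equation $L\phi_s + \lambda_s\phi_s = 0$ to a model constant-coefficient ODE of the form $\varphi'' + \lambda_s\varphi = 0$ via a Liouville-type change of variable that absorbs the drift $-Kx\partial_x$, or argue contrapositively via oscillation theory: if $\mu < \lambda_s$ then Sturm comparison would force a solution of $L\phi_s + \lambda_s\phi_s = 0$ to oscillate more than the first Neumann eigenfunction $u$ does on $[-d/2,d/2]$, contradicting the structure of solutions to the ODE. The sign of the boundary bracket $[b_s u^2 w]_{-d/2}^{d/2}$ also requires a careful choice of $\phi_s$ (symmetric, with appropriate slope at $\pm d/2$), exploited together with the Neumann condition $u'(\pm d/2) = 0$. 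Once this comparison is rigorously set up, taking $\sup_{s \in (0,1)}$ gives the stated bound $\lambda_1(L) \ge \sup_{s\in(0,1)} \{4s(1-s)\pi^2/d^2 + sK\}$.
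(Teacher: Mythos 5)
There is a genuine gap: your argument stops exactly where the work begins. You correctly observe (as the paper does) that $f:=v'$ solves the Dirichlet problem $f''-Kxf'=-(\lambda-K)f$, $f(\pm d/2)=0$, and you correctly write down the Riccati/ground-state identity. But the whole content of the theorem is the existence of a suitable comparison function $\phi_s>0$ with $L\phi_s+\lambda_s\phi_s=0$ and controllable boundary behaviour, and you explicitly defer this (``once this comparison is rigorously set up\dots''). Worse, the setup as described appears self-defeating: to kill the boundary bracket $\bigl[b_s u^2 w\bigr]_{-d/2}^{d/2}$ against the Neumann condition $u'(\pm d/2)=0$ you would essentially need $\phi_s'(\pm d/2)=0$, i.e.\ $\phi_s$ itself a positive Neumann eigenfunction with eigenvalue $\lambda_s$ --- which forces $\phi_s$ constant and $\lambda_s=0$. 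So the comparison function you need cannot exist in the form you require, and the oscillation-theory fallback you mention is not carried out. Also, the claim that $4s(1-s)$ comes from an AM--GM balancing is not borne out by any computation you give.

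The paper's proof avoids all of this with one elementary trick that is absent from your proposal: working with the positive Dirichlet eigenfunction $f=v'$, multiply the equation by $f^{a-1}$ for an arbitrary $a>1$ and integrate by parts. The drift term collapses to $\int_{-D}^{D} Kx f^{a-1}f'\,dx=-\tfrac{K}{a}\int_{-D}^{D}f^a\,dx$ because $f$ vanishes at the endpoints, and the substitution $u=f^{a/2}$ turns the remaining identity into
$$4s(1-s)\int_{-D}^{D}|u'|^2\,dx=(\lambda-Ks)\int_{-D}^{D}u^2\,dx,\qquad s=1-\tfrac1a,$$
where the coefficient $4s(1-s)=4(a-1)/a^2$ comes purely from the chain rule. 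Since $u$ vanishes at $\pm D$, the \emph{unweighted} one-dimensional Dirichlet Poincar\'e inequality $\int|u'|^2\ge(\pi/d)^2\cdot 4\int u^2$... more precisely $\int|u'|^2/\int u^2\ge\pi^2/(2D)^2=\pi^2/d^2$, gives the bound for each $s\in(0,1)$ with no weighted comparison function needed. You should replace the Sturm--Liouville/Riccati machinery by this power-substitution computation; as it stands your proposal does not constitute a proof.
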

\begin{proof} We set $\lambda := \lambda_1(L)$ for short.
Let $v$ be the first Neumann eigenfunction for $L$. If we put $D = \frac d2$ and $f = v'$ then $f$ is
the first Dirichlet eigenfunction for $L$ with
\begin{equation}\label{sol12}
f'' - Kxf' = -(\lambda - K)f,
\end{equation}
and
$$ f(-D) = f(D) = 0.$$
Since the first Dirichlet eigenfunction does not change sign we may assume $f > 0$ on $(-D,D)$.
Take any $a > 1$. Then by (\ref{sol12}) we have
\begin{equation}\label{sol13}
\int_{-D}^D f^{a-1}(x)f''(x)dx = -(\lambda -K) \int_{-D}^D f^a(x)dx 
+ \int_{-D}^d Kxf^{a-1}(x)f'(x) dx.
\end{equation}
By integration by parts and $f^{a-1}(\pm D) = 0$ we have
\begin{equation}\label{sol14}
\int_{-D}^D Kxf^{a-1}(x)f'(x)dx = -\frac Ka \int_{-D}^D f^a(x) dx.
\end{equation}
If we put $u = f^{\frac a2}$ then using (\ref{sol13}) and (\ref{sol14}) we get
$$ \frac{4(a-1)}{a^2} \int_{-D}^D |u'|^2 dx = (\lambda - K(1 - \frac 1a))\int_{-D}^D u^2 dx. $$
If we put $s = 1- 1/a$ this is equivalent to
$$ 4s(1-s) \int_{-D}^D |u'|^2 dx = (\lambda - Ks)\int_{-D}^D u^2 dx.$$
Thus we obtain
\begin{eqnarray*}
\frac{\lambda - Ks}{4s(1-s)} &=& \frac{\int_{-D}^D |u'|^2 dx}{\int_{-D}^D u^2 dx}\\
&\ge& \frac {\pi^2}{4D^2} = \frac {\pi^2}{d^2}
\end{eqnarray*}
where the equality holds for $u = \sin(\frac {\pi}{2D} x + \frac {\pi}2)$.
It follows that, for any $s \in (0,1)$ we have
$$ \lambda \ge 4s(1-s)\frac {\pi^2}{d^2} + Ks. $$
This completes the proof of Theorem \ref{sol11}.
\end{proof}
\begin{remark}Theorem 
Theorem \ref{sol3} improves earlier results in \cite{FS} and \cite{And-Ni}. In fact those results follow
from weaker inequalities than (\ref{sol7}). A principle behind those estimates is the 
``Bakry-Emery principle'' \cite{LuRowlettev3} :
If you have to replace the volume form $dV_g$ by $e^{-f}dV_g$
for some reason, then replace $\Delta$ by
$$\Delta_f := \Delta - \nabla f\cdot$$
and replace $\operatorname{Ric}$ by
$$ \operatorname{Ric}_f = \operatorname{Ric} + \nabla\nabla f.$$
Then a theorem for $(dV_g, \Delta, \operatorname{Ric})$ extends to $(e^{-f}dV_g, \Delta_f, \operatorname{Ric}_f)$.
\end{remark}

\section{The mean curvature flow and self-similar \\
solutions on Riemannian cone manifolds.} 

\begin{definition}\label{mcf1} A  Riemannian cone 
manifold $(C(N), \barg)$ over $(N,g)$ consists of a smooth manifold 
$C(N)$ diffeomorphic to $N \times \bfR_+$ and a Riemannian metric 
$$ \barg = dr^2 + r^2 g$$
on $C(N)$ where $r$ is the standard coordinate on $\bfR_+$.
\end{definition}

\begin{definition}\label{mcf2}Let $F : M \to (C(N),\barg)$ be an 
immersion. We call
$$\ArrowF(p)= r(F(p))\frac{\partial}{\partial r}~~\in T_{F(p)}C(N)$$
the {\it position vector} of $F$ at $p \in M$.
\end{definition}

\begin{example}\label{mcf3} If $N$ is $(n-1)$-dimensional standard sphere $S^{n-1}$ we have 
$C(N) = \bfR^n - \{\bfo\}$. For an immersion $F : M \to \bfR^n - \{\bfo\}$ the position vector in the sense
of Definition \ref{mcf2} is 
\begin{equation*}
\ArrowF(p) = r\frac\p{\p r} \in T_{F(p)}\bfR^n - \{\bfo\}.
\end{equation*}
But when $T_{F(p)}\bfR^n - \{\bfo\}$ is identified with $\bfR^n$, $\ArrowF(p)$ is identified with 
$F(p) \in \bfR^n$ which is exactly the position vector in the usual sense. 
\end{example}

An immersion $F : M \to (C(N),\barg)$ is called a 
{\it self-similar solution} to
the mean curvature flow if
$$ H = \lambda \ArrowF^\perp$$
where 
$H$ is the mean curvature vector at $F(p)$, and
$\perp$ denotes the orthogonal projection to the normal bundle.
There are many works for self-similar solutions 
$$F : M \to \bfR^n,$$
e.g. Huisken \cite{H}, Joyce-Lee-Tsui \cite{JoyceLeeTsui}.
We expect that they are extended to
$$F : M \to C(N).$$
As a typical such example we start with the following result which extends a result of 
Huisken \cite{H} in the case when $N = S^{n-1}$ and $C(N) = \bfR^n - \{\bfo\}$.

\begin{theorem}[Huisken]\label{mcf4} Let $M$ be compact manifold and $(N,g)$ a compact Riemannian manifold.
Let $F : M \times [0,T) \to C(N)$ be a mean curvature flow with
$$ \frac{\partial F}{\partial t}(p,t)=H_{t}(p)$$
where $H_t$ is the mean curvature vector of $F_t(M) := F(M,t)$. 
Then the maximal time $T$ of existence of the flow is finite.
\end{theorem}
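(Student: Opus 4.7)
The plan is to extend Huisken's original Euclidean argument to the cone setting by finding a scalar function on $M$ whose parabolic evolution has a strictly negative source term. The key geometric input is the defining identity of a Riemannian cone: the function $\rho := r^2/2$ on $(C(N),\barg)$ satisfies $\bar\nabla\bar\nabla\rho = \barg$, and in particular $\bar\nabla\rho = r\,\partial/\partial r$ coincides with the position vector $\ArrowF$ of any immersion into $C(N)$.

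Write $n := \dim M$ and set $u(p,t) := \rho(F(p,t)) = r(F(p,t))^2/2$. Since $\partial_t F = H_t$, one gets $\partial_t u = \langle \bar\nabla\rho, H_t\rangle = \langle \ArrowF, H_t\rangle$. For the induced Laplacian $\Delta = \Delta_{F_t^*\barg}$, the standard Gauss-formula computation yields, for any orthonormal frame $\{e_i\}_{i=1}^n$ of $TM$,
\[
\Delta u = \sum_{i=1}^{n} \bar\nabla^2\rho(e_i,e_i) + \langle \bar\nabla\rho, H_t\rangle = n + \langle \ArrowF, H_t\rangle,
\]
where the last equality uses $\bar\nabla^2\rho = \barg$. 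Subtracting the two identities produces the clean heat-type equation
\[
\left(\frac{\partial}{\partial t} - \Delta\right) u = -n.
\]

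From here the parabolic weak maximum principle on the compact manifold $M$ finishes the argument: at any space-time point where $u(\cdot,t)$ attains its spatial maximum one has $\Delta u \le 0$, hence $\partial_t u \le -n$, which integrates to
\[
\max_{p\in M} u(p,t) \le \max_{p\in M} u(p,0) - nt.
\]
Because $F$ takes values in the open cone $C(N) = N\times\bfR_+$ where $r>0$, the function $u$ stays strictly positive on $M\times[0,T)$, and in particular $\max_M u(\cdot, t) > 0$. Hence $nT \le \max_{p\in M} u(p,0) < \infty$, giving the claimed finiteness of $T$.

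The main obstacle is essentially bookkeeping: one must verify the Hessian identity $\bar\nabla^2(r^2/2) = \barg$ directly from $\barg = dr^2 + r^2 g$ (a short coordinate computation using the warped-product Koszul formula), and then check that the Gauss-type formula for $\Delta(\phi\circ F)$ quoted above applies verbatim in the cone, which it does because that computation is purely local. Beyond that, the argument is a direct translation of Huisken's Euclidean proof, with $\rho = r^2/2$ playing the exact role that $|x|^2/2$ plays in $\bfR^n$.
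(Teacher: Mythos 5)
The paper states this theorem without proof (it is a survey, deferring to \cite{FHY12}), but your argument is correct and is exactly the intended cone analogue of Huisken's Euclidean proof: the identity $\bar\nabla\bar\nabla(r^2/2)=\barg$ for the cone metric $\barg = dr^2+r^2g$ makes $u=r^2/2\circ F$ satisfy $(\partial_t-\Delta)u=-\dim M$, and positivity of $u$ plus the maximum principle on compact $M$ forces $T<\infty$. No gaps.
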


The following also extends a result of Huisken for $N = S^{n-1}$ and $C(N) = \bfR^n - \{\bfo\}$.
\begin{theorem}[Monotonicity formula, due to Huisken]\label{mcf5}
Let $\rho_T : \bfR \times (-\infty, T) \to \bfR$ be the backward heat kernel
$$ \rho_T(y,t) = \frac 1{(4\pi(T-t))^{m/2}} \exp(- \frac{y^2}{4(T-t)}).$$
Let $F : M \times [0,T) \to C(N)$ be the mean curvature flow. Then
\begin{eqnarray*}
&\frac d{dt}&\int_M \rho_T(r(F_t(p)),t)dV_{g_t} \\
&=& 
-\int_M \rho_{T}(r(F_t(p)),t) 
\biggl\vert\frac{\ArrowF_t^{\bot}(p)}{2(T-t)}
+H_t(p) \biggr\vert_{\barg}^2 dV_{g_t}.
\end{eqnarray*}
\end{theorem}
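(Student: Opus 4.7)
The plan is to mimic Huisken's original computation in $\bfR^n$, with the Euclidean position vector replaced throughout by the Euler vector field $\ArrowF = r\,\partial/\partial r$ on the cone $C(N)$. Writing $u(p,t) := \rho_T(r(F_t(p)),t)$, one first decomposes the total $t$-derivative as
\begin{equation*}
\frac{d}{dt}\int_M u\, dV_{g_t} \;=\; \int_M \Bigl[\,\partial_t \rho_T\big|_{x\text{ fixed}} + d\rho_T(H_t) - u\, |H_t|_{\barg}^2\,\Bigr]\, dV_{g_t},
\end{equation*}
using $\partial F/\partial t = H_t$ and the standard evolution $\partial_t dV_{g_t} = -|H_t|^2 dV_{g_t}$ of the induced volume form along mean curvature flow.

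The first two integrands are computed by a direct calculation. The frozen-time partial derivative yields $u\bigl[\tfrac{m}{2(T-t)} - \tfrac{r^2}{4(T-t)^2}\bigr]$. For the spatial piece, the $\barg$-gradient of $r^2/2$ on the cone equals the Euler field $\ArrowF$, so $\bar\nabla \rho_T = -\tfrac{\rho_T}{2(T-t)}\,\ArrowF$, and since $H_t$ is normal to $F_t(M)$,
\begin{equation*}
d\rho_T(H_t) = -\frac{u}{2(T-t)}\,\langle \ArrowF^{\perp}, H_t\rangle_{\barg}.
\end{equation*}
Expanding the target right-hand side $-u\,\bigl|\tfrac{\ArrowF^{\perp}}{2(T-t)} + H_t\bigr|^2_{\barg}$ and using $r^2 = |\ArrowF^{\perp}|^2 + |\ArrowF^{\top}|^2$, the identity to be proved reduces to
\begin{equation*}
\int_M u\Bigl[\,m + \langle \ArrowF^{\perp}, H_t\rangle - \frac{|\ArrowF^{\top}|^2}{2(T-t)}\,\Bigr]\, dV_{g_t} \;=\; 0,
\end{equation*}
where $\ArrowF^{\top}$ is the tangential component of $\ArrowF$ along $F_t(M)$.

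The only cone-geometric input is the identity $\bar\nabla_X \ArrowF = X$ on $C(N)$ for every tangent vector $X$, equivalently $\bar\nabla\bar\nabla(r^2/2) = \barg$. This is a short Christoffel-symbol computation from $\barg = dr^2 + r^2 g$ and is the one place where the cone structure really enters; it is the exact analogue of the Euclidean identity $D^2(|x|^2/2) = \mathrm{Id}$. Tracing this Hessian along $F_t(M)$ and applying the Gauss formula give the pointwise identity
\begin{equation*}
\Delta_M (r^2/2) \;=\; m + \langle \ArrowF^{\perp}, H_t\rangle,
\end{equation*}
while the relations $\nabla^M u = -\tfrac{u}{2(T-t)}\,\nabla^M(r^2/2)$ and $|\ArrowF^{\top}|^2 = |\nabla^M(r^2/2)|^2$ permit a single integration by parts on the closed manifold $M$ to conclude
\begin{equation*}
\int_M u\, \Delta_M(r^2/2)\, dV_{g_t} \;=\; \int_M u\,\frac{|\nabla^M(r^2/2)|^2}{2(T-t)}\, dV_{g_t}.
\end{equation*}

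The main obstacle is really only bookkeeping: there are three competing $(T-t)^{-k}$ terms to collect, and one must be careful to use the normality of $H_t$ to replace $\langle \ArrowF, H_t\rangle$ by $\langle \ArrowF^{\perp}, H_t\rangle$ before completing the square. Beyond this, the cone case is no harder than the Euclidean one, because the only ingredient that depends on the ambient geometry is the Hessian formula for $r^2/2$, and it takes the same clean form on $C(N)$ as on $\bfR^n$.
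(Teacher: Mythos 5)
The paper states this theorem without proof (it defers to \cite{FHY12}), so there is no in-paper argument to compare against; your proposal is correct and is the standard adaptation of Huisken's computation, with the single cone-specific input $\bar\nabla\bar\nabla(r^2/2)=\barg$ (equivalently $\bar\nabla_X\ArrowF=X$) playing exactly the role of $D^2(|x|^2/2)=\mathrm{Id}$ in $\bfR^n$. The reduction to $\int_M u\bigl[\,m+\langle \ArrowF^\perp,H_t\rangle_{\barg}-|\ArrowF^\top|^2/(2(T-t))\,\bigr]\,dV_{g_t}=0$ and the closing integration by parts (legitimate since $M$ is compact, as in Theorem \ref{mcf4}) are both sound.
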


\begin{definition}[Parabolic rescaling of scale $\lambda$]\label{mcf6}
Given a smooth map $F : M \times [0, T) \to C(N)$, 
the parabolic rescaling $F^{\lambda}:M\times[-\lambda^2 T,0)\rightarrow C(N)$ of scale
$\lambda$ is defined by
\begin{equation*}
F^{\lambda}(p,s)=(\pi_N(F(p,T+\frac{s}{\lambda^2})),\lambda r(F(p,T+\frac{s}{\lambda^2})))
\end{equation*}
where $\pi_N : C(N) = N \times \bfR \to N$ is the standard projection.
\end{definition}

\begin{definition}\label{mcf7} We say that 
a mean curvature flow $F$ develops a type $\mathrm{I}$ singularity
as $t \to T$ if 
there is a constant $C > 0$ such that
$$ \sup_M \left|\mathrm{II}_t\right|^2 \le \frac C{T-t} $$
where $\mathrm{II}_t$ is the second fundamental form of $F_t(M)$.
\end{definition}

Huisken \cite{H} has shown that if a mean curvature flow 
$F : M \times [0,T) \to \bfR^n$ develops a singularity of type $\mathrm{I}$
as $t \to T$ then for any increasing sequence $\{\lambda_i\}_{i=1}^{\infty}$\\
with $\lambda_i \to \infty$ as $i \to \infty$, $\{F^{\lambda_{i}}\}$ subconverges
to a self-similar solution $F^\infty : M_\infty \to \bfR^n$. To get a similar result in the
case of general cone $C(N)$ we need the following definition.
\begin{definition}\label{mcf8} A mean curvature flow $F : M\times [0,T) \to C(N)$ develops 
a type $\mathrm{I}_c$ singularities $t \to T$ if
\begin{enumerate}
\item[(a)]
$F$ develops a type $\mathrm{I}$ singularity as $t \to T$;
\item[(b)]
$r(F_t(p)) \to 0$ for some $p \in M$ as $t \to T$;
\item[(c)]  
For some positive constants $K_1$ and $K_2$ we have 
$K_1(T-t) \leq \min_{ M}r^2(F_{t}) \leq K_2(T-t)$ for all $t \in [0,T)$.
\end{enumerate}
\end{definition}
The following is a cone version of Huisken's result that the limit of parabolic rescalings of a type $\mathrm{I}$ 
singularity yields a self-similar solution. 
\begin{theorem}[\cite{FHY12}]\label{mcf9}
Let $M$ be an $m$-dimensional compact manifold and $C(N)$ the Riemannian cone manifold over an $n$-dimensional Riemannian manifold $(N,g)$. 
Let $F: M\times[0,T)\rightarrow C(N)$ be a mean curvature flow, and assume that $F$ develops a type $\mathrm{I}_c$ singularity at $T$. 
Then, for any increasing sequence $\{\lambda_{i}\}_{i=1}^{\infty}$ of the scales of parabolic rescaling
such that $\lambda_{i}\rightarrow\infty$ as $i\rightarrow\infty$, 
there exist a subsequence $\{\lambda_{i_{k}}\}_{k=1}^{\infty}$ and a sequence $t_{i_k} \to T$ 
such that the sequence of rescaled mean curvature flow $\{F^{\lambda_{i_{k}}}_{s_{i_k}}\}_{k=1}^{\infty}$ 
with $s_{i_k} = \lambda^2_{i_k} (t_{i_k} - T)$ converges to a self-similar solution 
$F^{\infty}:M_\infty \rightarrow C(N)$ to the 
mean curvature flow. 
\end{theorem}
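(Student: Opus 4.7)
The plan is to follow Huisken's original strategy in $\bfR^n$ but use the condition $(c)$ in Definition \ref{mcf8} to keep the rescaled images in a compact region of $C(N)$ that stays a definite distance away from the apex. The two main ingredients are (i) a compactness result that extracts a smooth limit from the rescaled flows, and (ii) the monotonicity formula (Theorem \ref{mcf5}) which forces the limit to satisfy $H^\infty = \lambda\, \ArrowF^{\infty,\perp}$ for a constant $\lambda$.

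First I would analyze how the hypotheses transform under the rescaling of Definition \ref{mcf6}. The type $\mathrm{I}$ bound $\sup_M |\mathrm{II}_t|^2 \le C/(T-t)$ becomes $\sup_M |\mathrm{II}^{\lambda}_s|^2 \le C/|s|$ on $[-\lambda^2 T, 0)$, which is uniform in $\lambda$ on any interval $[-A,-\delta]$ with $0<\delta<A$. Condition $(c)$ becomes $K_1|s| \le \min_M r^2(F^\lambda_s) \le K_2|s|$, so the rescaled images meet a fixed compact annulus in $C(N)$ (bounded away from $r=0$ and from $r=\infty$ on any compact $s$-interval). Standard parabolic regularity, iterated using Simons-type identities for the derivatives of $\mathrm{II}$, then yields uniform $C^k$ bounds on $F^{\lambda_i}_s$ for every $k$ on compact subsets of $M\times(-\infty,0)$. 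Applying a Langer-type compactness theorem (or Cheeger--Gromov--Hamilton's for mean curvature flow) produces a subsequence $\lambda_{i_k}$ and a limiting mean curvature flow $F^\infty: M_\infty \times (-\infty,0) \to C(N)$.

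Next, to identify $F^\infty$ as a self-similar solution, I would exploit the monotonicity formula. The quantity $\Phi(t):=\int_M \rho_T(r(F_t(p)),t)\, dV_{g_t}$ is monotone nonincreasing by Theorem \ref{mcf5} and (using $(c)$) stays bounded below, so $\lim_{t\to T}\Phi(t)$ exists and the defect
\begin{equation*}
\int_0^T \int_M \rho_T\,\Bigl|\tfrac{\ArrowF_t^\perp}{2(T-t)} + H_t\Bigr|^2\, dV_{g_t}\, dt \;<\; \infty.
\end{equation*}
The parabolic rescaling is designed precisely so that $\Phi$ is scale-invariant: writing $t = T + s/\lambda^2$ and pulling back, the scaled version of the soliton defect at time $s$ equals $(T-t)$ times the original defect up to a universal factor. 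Hence one can choose times $t_{i_k}\to T$ with $s_{i_k}=\lambda_{i_k}^2(t_{i_k}-T)$ staying in a fixed interval such that the integrated defect for $F^{\lambda_{i_k}}_{s_{i_k}}$ tends to zero. Passing to the smooth limit on the chosen compact subsets gives
\begin{equation*}
H^\infty + \tfrac{1}{2|s_\infty|}\, \ArrowF^{\infty,\perp} = 0,
\end{equation*}
which is the self-similar equation with $\lambda = -1/(2 s_\infty)$.

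The main obstacle will be the compactness step in the conical setting. Unlike $\bfR^n$, the ambient $C(N)$ has nontrivial curvature that blows up as $r\to 0$, so the derivative estimates on $\mathrm{II}$ pick up curvature terms of the ambient space, and one must verify that on the annular region provided by $(c)$ these curvature terms remain uniformly bounded under rescaling. Equivalently, one must check that rescaling pushes the images into regions where the geometry of $C(N)$ looks flatter and flatter only in the radial direction, while the spherical directions remain nontrivial; this is exactly where condition $(c)$ (the lower bound $K_1(T-t)\le \min r^2$) is indispensable, as it prevents the rescaled surface from being dragged into the apex where no smooth limit could exist.
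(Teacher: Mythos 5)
The paper contains no proof of Theorem \ref{mcf9}; it is stated as a survey item with the argument deferred to \cite{FHY12}. Your outline reconstructs what is in fact the intended proof: Huisken's type $\mathrm{I}$ blow-up analysis transplanted to the cone, using the exact scale invariance of $\int_M\rho_T\,dV_{g_t}$ under the parabolic rescaling of Definition \ref{mcf6}, the finiteness of the time-integrated soliton defect coming from the monotonicity formula, and a mean-value selection of the times $s_{i_k}$ in a fixed interval $[-A,-\delta]$ so that the defect of $F^{\lambda_{i_k}}_{s_{i_k}}$ tends to zero. In structure this matches the cited source.

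Two points in your write-up need correction. First, the closing claim that rescaling makes $C(N)$ ``look flatter and flatter in the radial direction'' is wrong: the dilation $(x,r)\mapsto(x,\lambda r)$ is an \emph{exact homothety} of $(C(N),\barg)$, since $d(\lambda r)^2+(\lambda r)^2g=\lambda^2\barg$, so the rescaled flows live in the same ambient space with the same curvature profile $|\mathrm{Rm}_{\barg}|=O(r^{-2})$. No flattening occurs and none is needed; the sole role of the lower bound in condition (c) is that on $s\in[-A,-\delta]$ the rescaled images stay in $\{r\ge\sqrt{K_1\delta}\}$, where the ambient curvature and its covariant derivatives are uniformly bounded, which is exactly what the Simons-type derivative estimates require. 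Second, condition (c) controls only $\min_M r^2(F_t)$, so there is no a priori bound on $\max_M r(F^{\lambda_i}_s)$: points of $M$ whose images do not collapse to the apex are pushed to $r=\infty$ under rescaling. This is why the limit is a new manifold $M_\infty$ (generally noncompact) and why the compactness must be taken in the local Langer/Cheeger--Gromov sense on compact regions of $C(N)\times(-\infty,0)$, not on compact subsets of $M\times(-\infty,0)$ as you wrote. With these adjustments your sketch is sound and follows the route of \cite{FHY12}.
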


\section{Sasaki manifolds}

In this section we briefly review Sasakian geometry. See for example \cite{futaki10} for more details. 
A Riemannian manifold 
$(N,g)$ with $\dim N = 2m +1$ is called a Sasaki manifold 
if the 
cone manifold 
$(C(N), \overline g)$ with
$C(N) = N \times \bfR_+$, $\overline g = dr^2  + r^2g$, 
is K\"ahler. 
Thus we have 
$\dim_{\bfC} C(N) = m+1$.
We often identify
$N$ with the submanifold $ \{r = 1\}$ in $C(N)$. 

The vector field $\xi = J(r\frac{\partial}{\partial r})$ is called the Reeb vector field.
Then 
$\frac12(\xi - iJ\xi)$ generates a holomorphic flow on $C(N)$.
The 
local leaf spaces of $\frac12(\xi - iJ\xi)$ on $C(N)$ is identified with the local leaf spaces of $\xi$ on $N$.
Here $\xi$ preserves $N \cong \{r=1\}$ and is considered as a vector field on $N$. 
Thus Reeb flow $\mathcal F_{\xi}$ on $N$ 
has a transverse holomorphic structure.

The dual 1-form $\eta$ of $\xi$ is a contact form on $N$. Thus 
$d\eta$ is non-degenerate on $\Ker\ \eta \cong \nu(\mathcal F_{\xi})$
and 
$d\eta/2$ gives $\mathcal F_{\xi}$ a transverse K\"ahler 
structure.
The contact form 
$\eta$ can be lifted to a 1-form $\widetilde{\eta}$ on $C(N)$, and 
$d\widetilde{\eta} = \frac{\sqrt{-1}}2 \partial\barpartial \log r^2$ 
restricts to the transverse K\"ahler form on $N$.
On the other hand, 
$\frac{\sqrt{-1}}2 \partial\barpartial r^2$ is the K\"ahler form on $C(N)$.
A typical Example is the triple 
$$(C(N), N, \mathrm{leaf\ space}) = (\bfC^{m+1}-\{0\}, S^{2m+1}, \bfC\bfP^m).$$
\noindent
Simple curvature computations show that the following proposition.
\begin{proposition}\label{Sasaki1} The following three conditions are equivalent:
\begin{enumerate}
\item $C(N)$ is Ricci-flat K\"ahler;
\item $N$ is Einstein;
\item the local leaf spaces are positive K\"ahler-Einstein.
\end{enumerate}
\end{proposition}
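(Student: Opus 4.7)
The plan is to establish the two equivalences $(1)\Leftrightarrow(2)$ and $(2)\Leftrightarrow(3)$ by direct curvature computations, using two standard transition formulas: the Ricci tensor of a cone metric in terms of the Ricci tensor of its base, and the Sasakian identity relating $\operatorname{Ric}_g$ on $N$ to the transverse Ricci tensor $\operatorname{Ric}^T$ of $\mathcal F_\xi$.

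For $(1)\Leftrightarrow(2)$, I would work with the cone metric $\barg = dr^2 + r^2 g$ and an adapted frame $\{\partial_r,\, r^{-1}e_i\}$ where $\{e_i\}$ is a local $g$-orthonormal frame on $(N,g)$. A short Koszul/warped-product calculation gives the well-known identities
$$\overline{\operatorname{Ric}}(\partial_r,\partial_r)=0,\quad \overline{\operatorname{Ric}}(\partial_r,X)=0,\quad \overline{\operatorname{Ric}}(X,Y)=\operatorname{Ric}_g(X,Y)-(\dim N-1)\,g(X,Y)$$
for $X,Y$ tangent to the $N$-slice, so that $\overline{\operatorname{Ric}}=0$ holds for every $r$ if and only if $\operatorname{Ric}_g=(\dim N-1)g=2m\,g$, i.e.\ $(N,g)$ is Einstein. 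Since $\barg$ is already assumed K\"ahler on $C(N)$ by the definition of a Sasaki manifold, ``Ricci-flat'' in $(1)$ means $\overline{\operatorname{Ric}}=0$, and the equivalence $(1)\Leftrightarrow(2)$ follows.

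For $(2)\Leftrightarrow(3)$, the key is the transverse structure described in Section 4: the foliation $\mathcal F_\xi$ is Riemannian, $\xi$ is a unit Killing vector field of length $1$, and $d\eta/2$ is the transverse K\"ahler form. I would plug $\xi$ (with $|\xi|=1$) and horizontal vectors $X,Y\in\nu(\mathcal F_\xi)$ into the Sasakian curvature identities (obtained from O'Neill's formulas applied to the local Riemannian submersion $N\to N/\mathcal F_\xi$, with O'Neill tensor encoded in $d\eta$) to get
$$\operatorname{Ric}_g(\xi,\xi)=2m,\qquad \operatorname{Ric}_g(\xi,X)=0,\qquad \operatorname{Ric}_g(X,Y)=\operatorname{Ric}^T(X,Y)-2\,g^T(X,Y).$$
Einstein-ness $\operatorname{Ric}_g=2m\,g$ then becomes $\operatorname{Ric}^T=(2m+2)g^T=2(m+1)g^T$ on the transverse distribution, i.e.\ the local leaf spaces are K\"ahler-Einstein with positive Einstein constant $2(m+1)$. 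This yields $(2)\Leftrightarrow(3)$, and together with the previous step closes the chain.

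The main place requiring care is the $-2g^T$ correction in the third Sasakian identity: it comes from the non-vanishing of the O'Neill $A$-tensor (essentially $d\eta/2$), and one must track the conventions for $\xi$, $\eta$ and the normalization of the transverse K\"ahler form so that the Einstein constant $2(m+1)$ on the leaf space matches the constant $2m$ on $N$ and the Ricci-flatness on the $(m+1)$-complex-dimensional cone. Once these conventions are fixed, the whole proposition reduces to the two displayed algebraic identities above.
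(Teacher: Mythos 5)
Your proposal is correct and is essentially the proof the paper has in mind: the paper dismisses the proposition with ``Simple curvature computations show that the following proposition,'' and the computations you supply (the warped-product/cone Ricci formula giving $\overline{\operatorname{Ric}}=\operatorname{Ric}_g-2m\,g$ on slices, and the Sasakian/O'Neill identities $\operatorname{Ric}_g(\xi,\xi)=2m$, $\operatorname{Ric}_g(X,Y)=\operatorname{Ric}^T(X,Y)-2g(X,Y)$) are exactly the standard ones being invoked, with the constants $2m$ on $N$ and $2(m+1)$ transversely matching correctly.
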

\noindent
Again the triple $(\bfC^{m+1}-\{0\}, S^{2m+1}, \bfC\bfP^m)$ is a typical example of Proposition \ref{Sasaki1}.
\begin{example} Let $M$ be a compact K\"ahler manifold and
$\omega$ a K\"ahler form such that $[\omega]$ is an integral class.
Let $p : L \to M$ be a holomorphic  line bundle with $c_1(L) = - [\omega]$.
We take an Hermitian metric 
$h$ such that $i\partial\barpartial \log h = \omega$ and put 
$r$ to be the distance from the zero section along the fiber with respect to  $h$. 
Let $N \subset L$ be the unit circle bundle (i.e. associated $U(1)$-bundle). 
Then $N$ is a Sasaki manifold, and 
$C(N) = L - \{\mathrm{zero\ section}\}$
where Reeb field generates the $S^1$-action.  
In this case 
$\eta$ is the connection form of $L \to M$.
\end{example}
Let $N$ be a general Sasaki manifold.
A smooth differential form $\alpha$ on $N$ is said to be basic if
$$ i(\xi)\alpha = 0\quad \mathrm{and}\quad \mathcal L_{\xi}\alpha = 0.$$
Denote by $\Omega_B^{p,q}$ the set (or sheaf of germs of) basic $(p,q)$-forms, and then we have natural
operators 
$$ \partial_B : \Omega_B^{p,q} \to \Omega_B^{p+1,q}, \quad\barpartial_B : \Omega_B^{p,q} \to
\Omega_B^{p,q+1}.$$
They satisfy $\partial_B^2 = \barpartial_B^2 = 0$. 
Let $H^{p,q}_B(N)$ be the corresponding cohomology groups which we call the basic cohomology group of type $(p,q)$.
As we can define the Chern classes of compact complex manifolds using Hermitian metrics and its Ricci form, we can define 
Chern classes for transversely holomorphic foliations. They are Chern classes for the normal bundle $\nu(\mathcal F_\xi)$ of the Reeb flow $F_{\xi}$.
They can be expressed as basic cohomology classes, and for this reason they are called the {\it basic Chern classes}. 
In particular we have the first basic first Chern class 
$$c_1^B(\nu(\mathcal F_\xi)) \in H^{1,1}_B(N).$$

Let $\omega^T := \frac 12 d\eta$ be the transverse K\"ahler form.
Suppose $N$ is Sasaki-Einstein. 
Then since $N$ is transversely K\"ahler-Einstein
$$ (2m+2)[\omega^T] = c_1^B(\nu(\mathcal F_\xi)).$$
The following proposition gives a necessary condition for $N$ to be Sasaki-Einstein.
\begin{proposition}\label{Sasaki2} Let $N$ be a compact Sasaki manifold. Then there exists a 
real number $\kappa$ with 
$ c_1^B(\nu(\mathcal F_\xi)) = \kappa [\omega^T] $ for some $\kappa >0$
if and only if 
$c_1(D) = 0$ and $c_1^B > 0$ where $D = \Ker\ \eta$. Here $c_1^B > 0$ means that $c_1^B$ 
is represented by a positive $(1,1)$-form on each local orbit space of the Reeb flow $\mathcal F_\xi$.
\end{proposition}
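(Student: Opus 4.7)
\emph{Proof plan.} The plan is to study the natural map
$$ j : H^{1,1}_B(N) \longrightarrow H^2(N;\bfR) $$
induced by the inclusion of basic forms into all smooth forms on $N$. Since the Reeb line $\langle\xi\rangle$ is a trivial real rank-one subbundle of $TN$, the complex vector bundles $\nu(\mathcal F_\xi) = TN/\langle\xi\rangle$ and $D = \Ker\ \eta$ are isomorphic, so $j$ carries $c_1^B(\nu(\mathcal F_\xi))$ to $c_1(D)$. The whole argument is organized around locating $[\omega^T] = \tfrac{1}{2}[d\eta]$ in $H^{1,1}_B(N)$ relative to $\Ker\ j$; note that $j([\omega^T]) = 0$ because $d\eta$ is globally exact on $N$, even though $[d\eta]$ is typically nontrivial in basic cohomology.

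For the direction $(\Rightarrow)$, assume $c_1^B(\nu(\mathcal F_\xi)) = \kappa[\omega^T]$ with $\kappa>0$. Positivity of the transverse K\"ahler form $\omega^T$, which is built into the Sasaki structure, immediately gives $c_1^B > 0$, and applying $j$ yields $c_1(D) = \kappa\, j([\omega^T]) = 0$. This half is essentially automatic. For the direction $(\Leftarrow)$, assuming $c_1(D)=0$ and $c_1^B > 0$, the first hypothesis translates to $j(c_1^B)=0$, so the content reduces to the kernel identification
$$ \Ker\bigl(j|_{H^{1,1}_B(N)}\bigr) \;=\; \bfR\cdot[\omega^T]. $$
Granting this, $c_1^B = \kappa[\omega^T]$ for some $\kappa \in \bfR$, and the positivity of both $c_1^B$ and $[\omega^T]$ then forces $\kappa>0$.

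The main obstacle, and the technical heart of the proof, is establishing the displayed kernel identification. I would approach it via the transverse Hodge theory for Riemannian foliations of El Kacimi-Alaoui, combined with a transverse $\partial_B\barpartial_B$-lemma: any closed basic $(1,1)$-form that becomes $d$-exact on $N$ must, after averaging along the Reeb flow, differ from a real multiple of $d\eta$ by a form of type $\sqrt{-1}\,\partial_B\barpartial_B\varphi$ with $\varphi$ a basic function. Concretely, writing a primitive $\beta$ of the given basic $(1,1)$-form as $\beta = f\eta + \beta_B$ (with $\beta_B$ basic after averaging in the quasi-regular case) and re-expanding $d\beta$ into its basic and non-basic components pins down the class modulo $[d\eta]$. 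In the quasi-regular case this reduces to the ordinary $\partial\barpartial$-lemma on the compact K\"ahler orbifold leaf space; the irregular case should follow by approximating the Reeb vector field in the Sasaki cone by quasi-regular ones and passing to the limit in the resulting cohomology relation.
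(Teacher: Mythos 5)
The paper states this proposition without proof (it is quoted from \cite{FOW}), so there is no in-text argument to compare against; judged on its own terms, your overall route is the standard one. Passing everything through the map $j : H^{1,1}_B(N) \to H^2(N;\bfR)$, noting that $j$ carries $c_1^B(\nu(\mathcal F_\xi))$ to $c_1(D)$ via the bundle isomorphism $D \cong \nu(\mathcal F_\xi)$ and kills $[\omega^T] = \tfrac12[d\eta]_B$, and reducing the nontrivial implication to the identification $\Ker\bigl(j|_{H^{2}_B}\bigr) = \bfR\cdot[d\eta]_B$ is exactly right. The forward implication is indeed immediate, and the sign of $\kappa$ in the converse does come out positive, though to close that point you should integrate a positive basic representative of $c_1^B$ against $\eta\wedge(\omega^T)^{m-1}$ over $N$ (the exact term drops out by Stokes because any horizontal $(2m+1)$-form on $N$ vanishes); positivity of two classes does not by itself compare them.

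The one step I would not accept as written is your plan for the irregular case, namely ``approximating the Reeb vector field by quasi-regular ones and passing to the limit in the resulting cohomology relation.'' Deforming $\xi$ in the Sasaki cone changes the foliation, hence the basic cohomology groups, the contact form $\eta$, the transverse K\"ahler class, and the bundle $D$ itself; the cohomology relations for the approximating structures live in different groups, the hypotheses $c_1(D)=0$ and $c_1^B>0$ do not obviously persist under the deformation, and there is no evident limit to pass to. Fortunately the detour is unnecessary, as is the transverse $\partial_B\barpartial_B$-lemma: the averaging device you invoke works verbatim for any compact Sasaki manifold. If $\alpha$ is a closed basic $2$-form with $\alpha = d\beta$ on $N$, average $\beta$ over the closure of the Reeb flow in the compact isometry group of $(N,g)$ --- a torus under which $\alpha$ is invariant because it is basic --- to arrange $\mathcal L_\xi\beta = 0$; then Cartan's formula gives $0 = i(\xi)\alpha = \mathcal L_\xi\beta - d(\beta(\xi)) = -d(\beta(\xi))$, so $f := \beta(\xi)$ is constant, and writing $\beta = f\eta + \beta_H$ with $i(\xi)\beta_H = 0$ yields $\alpha = f\,d\eta + d\beta_H$ with $\beta_H$ basic, i.e.\ $[\alpha]_B \in \bfR[d\eta]_B$. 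Equivalently one may quote the exact sequence $H^0_B \to H^2_B \to H^2(N;\bfR)$ for the taut Riemannian Reeb foliation, whose first map is cup product with $[d\eta]_B$. With that replacement your argument is complete.
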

\begin{definition}We say $g$ is a transverse K\"ahler-Ricci soliton if
$$\rho^T - \omega^T = L_X \omega^T$$
where $\rho^T$ is the Ricci form of $\omega^T$ and $X$ is a 
vector field on $N$ which is obtained as the restriction to $N \cong \{r=1\}$ of the real part of a holomorphic vector field $\tilde X$ on $C(N)$ 
with $[\tilde X, \xi] = 0$.
\end{definition}

It is well-known that, on a Fano manifold, if the Futaki invariant vanishes then K\"ahler-Ricci soliton is a
K\"ahler-Einstein metric. In the similar way, we can define the Futaki invariant, which is called the Sasaki-Futaki invariant, 
and its various extensions for 
transverse K\"ahler structures on compact Sasaki manifolds, and in the case when $c_1(D) = 0$ and $c_1^B > 0$,
if the Sasaki-Futaki invariant vanishes then any transverse K\"ahler-Ricci soliton is a transverse K\"ahler-Einstein metric.
Therefore by Proposition \ref{Sasaki1} we obtain a Sasaki-Einstein metric if we can find a Sasaki manifold with 
$c_1(D) = 0$ and $c_1^B > 0$, with vanishing Sasaki-Futaki invariant and with a transverse K\"ahler-Ricci soliton.
\begin{definition} A Sasaki manifold $N$ is toric if $C(N)$ is toric.
\end{definition}
\begin{theorem} [\cite{FOW}] Let 
$N$ be a compact toric Sasaki manifold with 
$c_1^B > 0$ and $c_1(D) = 0$. 
Then there exists
a transverse K\"ahler-Ricci soliton.
\end{theorem}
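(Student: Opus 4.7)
The plan is to adapt Wang-Zhu's construction of K\"ahler-Ricci solitons on toric Fano manifolds to the transverse K\"ahler setting on a compact toric Sasaki manifold. The toric hypothesis is essential: it reduces the problem to a real Monge-Amp\`ere equation on the moment polytope obtained by slicing the moment cone of $C(N)$ at unit Reeb level, and it supplies the convex-analytic tools needed both to pick the correct soliton vector field and to establish the $C^0$-estimate.

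First I would set up the equation. By Proposition \ref{Sasaki2}, the hypotheses $c_1(D) = 0$ and $c_1^B > 0$ furnish a positive real number $\kappa$ with $c_1^B(\nu(\mathcal F_\xi)) = \kappa [\omega^T]$; after rescaling the Reeb field we may assume $\kappa = 2m+2$. Fix a reference transverse K\"ahler form $\omega_0^T$ in this basic class and a basic Ricci potential $F_0$ with $\rho_0^T - \omega_0^T = \sqrt{-1}\,\partial_B \bar\partial_B F_0$. Writing $\omega_\varphi^T = \omega_0^T + \sqrt{-1}\,\partial_B \bar\partial_B \varphi$ and denoting by $\theta_X(\varphi)$ the basic Hamiltonian of $X$ with respect to $\omega_\varphi^T$, the transverse K\"ahler-Ricci soliton equation $\rho^T - \omega^T = L_X \omega^T$ becomes the basic Monge-Amp\`ere equation
\begin{equation*}
(\omega_0^T + \sqrt{-1}\,\partial_B \bar\partial_B \varphi)^m = e^{F_0 - \theta_X(\varphi)}\,(\omega_0^T)^m,
\end{equation*}
to be solved for a pair $(\varphi, X)$ with $X$ lying in the Lie algebra $\mathfrak{t}_\xi$ of the subtorus commuting with $\xi$.

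To single out $X$, I would follow the Tian-Zhu selection procedure and study the functional $\Psi \colon \mathfrak{t}_\xi \to \bfR$ defined by $\Psi(Y) = \int_N e^{-\theta_Y}(\omega_0^T)^m \wedge \eta$. Using the toric structure, the transverse moment image is a strictly convex polytope $\Delta \subset \mathfrak{t}_\xi^{*}$, from which one reads off that $\Psi$ is strictly convex and proper; let $X \in \mathfrak{t}_\xi$ be its unique minimizer. This $X$ is precisely the element at which the modified Sasaki-Futaki invariant attached to transverse K\"ahler-Ricci solitons vanishes, and is therefore the only candidate compatible with a solution. With $X$ fixed, I would run an Aubin-type continuity method joining the trivial equation $(\omega_\varphi^T)^m = (\omega_0^T)^m$ to the soliton equation above, with an extra $-t\varphi$ term inserted to break the constant kernel; openness then follows from the invertibility of the linearization $\Delta_B - X \cdot - t$ on basic H\"older spaces. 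Transverse versions of Yau's $C^2$ estimate and the Evans-Krylov estimate reduce everything to a uniform $C^0$-bound.

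The hard part, and the real content of the theorem, is this $C^0$-estimate uniform in the continuity parameter. Here the toric symmetry is decisive: after averaging $\varphi_t$ over the torus it becomes invariant and descends to a convex function on $\Delta$ whose Legendre transform is the symplectic potential. I would then transpose Wang-Zhu's argument in the Fano case to this transverse polytopal setting: the minimality of $X$ for $\Psi$ plays the role of Futaki-invariant vanishing, giving control on the barycenter of the weighted measure $e^{-\theta_X(\varphi_t)}(\omega_{\varphi_t}^T)^m \wedge \eta$ and preventing mass concentration near $\partial \Delta$; combined with a Trudinger-Moser inequality on $\Delta$ and a transverse $I$-$J$ functional comparison, this yields a uniform two-sided bound on the normalized potential. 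Closedness of the continuity method then follows, and passage to the endpoint produces the desired transverse K\"ahler-Ricci soliton.
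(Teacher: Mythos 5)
The survey states this theorem without proof, citing \cite{FOW} and noting that the Fano case is due to Wang--Zhu; your outline --- reduction to a basic Monge--Amp\`ere equation, selection of $X$ by minimizing the Tian--Zhu functional over the torus Lie algebra, a continuity method with the uniform $C^0$-estimate obtained from convex analysis on the moment polytope --- is precisely the strategy carried out in \cite{FOW}. So your proposal is correct and takes essentially the same route as the proof the paper points to.
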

\noindent
In the Fano manifold case, this theorem is due to X.-J. Wang and X.-H. Zhu \cite{Wang-Zhu}. 

By leaning the Reeb vector field $\xi$ in the Lie algebra of the torus one can get another Reeb vector field with 
vanishing
Sasaki-Futaki invariant.  This is based on the idea by Martelli-Saprks-Yau \cite{MSY2}, called 
the volume minimization or Z-minimization in AdS-CFT correspondence.  
From this we get the following theorem. 
\begin{theorem}[\cite{FOW}]\label{Sasaki3} Let 
$N$ be a compact toric Sasaki manifold with $c_1^B > 0$ and $c_1(D) = 0$. Then
one can find a deformed Sasaki structure
on which a Sasaki-Einstein metric exists.
\end{theorem}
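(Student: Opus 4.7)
The plan is to combine the previous theorem with the volume minimization principle of Martelli--Sparks--Yau and the vanishing-obstruction argument for transverse K\"ahler--Ricci solitons. Starting from the given toric Sasaki structure on $N$ with Reeb field $\xi$, I would regard the positive cone of admissible Reeb vector fields inside the Lie algebra $\mathfrak{t}$ of the maximal torus. For each such vector field $\xi'$ in this cone one obtains a deformed Sasaki structure on $N$, and the conditions $c_1(D)=0$ and $c_1^B(\nu(\mathcal F_{\xi'}))>0$ are preserved under this deformation because $c_1(D)$ depends only on the contact distribution (unchanged by the deformation) and positivity of $c_1^B$ is an open condition in the positive cone.

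Next I would invoke the Martelli--Sparks--Yau volume (or Z-) minimization: the Einstein--Hilbert type functional on the cone of admissible Reeb fields is strictly convex and proper, hence attains a unique interior minimum $\xi^\ast$. The key point, due to \cite{MSY2}, is that the derivative of this functional along a direction $\zeta \in \mathfrak{t}$ is proportional to the Sasaki--Futaki invariant evaluated on $\zeta$; consequently, at the minimizer $\xi^\ast$ the Sasaki--Futaki invariant vanishes on the whole torus $\mathfrak{t}$, and the toric assumption implies it vanishes on all holomorphic vector fields commuting with $\xi^\ast$ (since on a toric Sasaki manifold these are essentially generated by $\mathfrak{t}^\C$).

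Having selected this minimizing Reeb field, I would apply the previous theorem (\cite{FOW}) to the deformed Sasaki structure associated to $\xi^\ast$ to produce a transverse K\"ahler--Ricci soliton, i.e.\ a transverse K\"ahler metric $g^T$ with
\begin{equation*}
\rho^T - \omega^T = \mathcal L_X \omega^T
\end{equation*}
for some torus-invariant holomorphic vector field $X$. The uniqueness/obstruction theory for transverse K\"ahler--Ricci solitons, analogous to the Fano case of Tian--Zhu, asserts that the field $X$ is characterized by the Sasaki--Futaki invariant; since that invariant vanishes at $\xi^\ast$, we must have $X=0$, so $g^T$ is transverse K\"ahler--Einstein with $\rho^T = (2m+2)\omega^T$. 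Finally, Proposition \ref{Sasaki1} converts transverse K\"ahler--Einstein into Sasaki--Einstein on $N$ equipped with the deformed structure.

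The main obstacle I anticipate is the identification, inside the toric setting, between the derivative of the volume functional and the Sasaki--Futaki invariant, and the subsequent use of this vanishing to force $X=0$ in the soliton equation. In particular, the argument that at a critical point of volume minimization the transverse K\"ahler--Ricci soliton reduces to a K\"ahler--Einstein metric requires a careful Sasaki analogue of the Tian--Zhu uniqueness theorem for K\"ahler--Ricci solitons, together with the fact that on a toric Sasaki manifold every holomorphic vector field commuting with the Reeb field can be pushed into the torus algebra. Once these two ingredients are in place, the rest of the argument is a direct assembly of the cited results.
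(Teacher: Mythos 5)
Your proposal follows essentially the same route as the paper: deform the Reeb field within the torus algebra via Martelli--Sparks--Yau volume minimization to make the Sasaki--Futaki invariant vanish, apply the preceding theorem of \cite{FOW} to obtain a transverse K\"ahler--Ricci soliton on the deformed structure, conclude $X=0$ by the Tian--Zhu-type obstruction theory, and pass from transverse K\"ahler--Einstein to Sasaki--Einstein via Proposition \ref{Sasaki1}. This matches the argument the survey sketches, so no further comment is needed.
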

Now we wish to get a better understanding of the conditions 
for $N$ to have $c_1^B > 0$ and $c_1(D) = 0$. 
\begin{theorem}[\cite{CFO}]\label{Sasaki4} Let $N$ be a compact toric Sasaki manifold with 
$\dim N \ge 5$. We regard $C(N)$ as a toric variety including the apex.
Then the following four conditions are equivalent.
\begin{enumerate}
\item[(a)] $N$ has $c_1^B > 0$ and $c_1(D) = 0$.
\item[(b)] For some positive integer $\ell$, the $\ell$-th power ${\mathcal K}^{\otimes\ell}_{C(N)}$ of the canonical sheaf 
${\mathcal K}_{C(N)}$ is trivial. In particular the apex is a $\bfQ$-Gorenstein
singularity. 
\item[(c)]  The Sasaki manifold $N$ is obtained from a toric diagram with height $\ell$ for some
positive integer $\ell$.
\end{enumerate}
\end{theorem}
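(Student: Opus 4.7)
The plan is to prove (b)$\Leftrightarrow$(c) by standard combinatorial toric geometry and (a)$\Leftrightarrow$(b) by translating between the holomorphic structure of $\mathcal K_{C(N)}$ and the Chern-type data of the contact distribution $D$ on $N$. Setup: let $T = T^{m+1}$ act on $C(N)$ with moment map $\mu : C(N) \to \mathfrak t^*$, whose image is a strictly convex rational polyhedral cone; let $u_1,\dots,u_d \in \mathfrak t_{\bfZ}$ be the primitive inward-pointing normals of its facets. These vectors form the \emph{toric diagram}, and it has height $\ell$ when $\ell$ is the smallest positive integer for which some primitive $\gamma_0 \in \mathfrak t^*_{\bfZ}$ satisfies $\langle\gamma_0,u_i\rangle = \ell$ for every $i$.

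For (b)$\Leftrightarrow$(c), use the affine-toric formulas $K_{C(N)} = -\sum_{i=1}^d D_i$ and $(\chi^\gamma) = \sum_i \langle\gamma,u_i\rangle D_i$. Since the Picard group of an affine toric variety is trivial, $\mathcal K_{C(N)}^{\otimes\ell}$ is a trivial line bundle iff $\ell\sum D_i$ is principal, iff $\langle\gamma,u_i\rangle = \ell$ for all $i$ for some $\gamma \in \mathfrak t^*_{\bfZ}$. Taking $\ell$ minimal forces $\gamma$ to be primitive, giving (c).

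For (b)$\Rightarrow$(a), the holomorphic tangent bundle splits along $N \cong \{r=1\}$ as
\[
T^{1,0}C(N)|_N \;\cong\; D^{1,0}\;\oplus\;\bfC\bigl(r\tfrac{\p}{\p r}-i\xi\bigr),
\]
and since the second factor is a trivial complex line bundle we obtain $c_1(D) = -c_1(\mathcal K_{C(N)})|_N = 0$ in $H^2(N;\bfR)$. A $T$-invariant trivialization $\Omega$ of $\mathcal K_{C(N)}^{\otimes\ell}$ then defines a basic Ricci potential $h$ on $N$ via a Calabi-type identity of the form $\Omega\wedge\overline\Omega = e^{\ell h}\omega^{(m+1)\ell}/((m+1)!)^\ell$, from which $c_1^B$ is identified with a positive multiple of $[\omega^T]$; the positive sign comes from the positivity of the height $\ell$, yielding $c_1^B > 0$. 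Conversely, for (a)$\Rightarrow$(b), the same splitting together with $c_1(D) = 0$ gives $c_1(\mathcal K_{C(N)}) = 0$ in $H^2(C(N)\setminus\{0\};\bfR) \cong H^2(N;\bfR)$. Using $\dim N \ge 5$, so the apex has complex codimension $\ge 3$, a Hartogs-type extension across the apex combined with Lerman's description of good toric cones shows that $\mathcal K_{C(N)}$ is $\bfQ$-Cartier, so that some power $\mathcal K_{C(N)}^{\otimes\ell}$ is Cartier and hence, by the vanishing of Pic, holomorphically trivial.

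The main technical obstacle is the (a)$\Rightarrow$(b) step: translating the cohomological vanishing $c_1(D) = 0$ on the link $N$ into global holomorphic triviality of a power of the canonical sheaf on the singular cone $C(N)$. The hypothesis $\dim N \ge 5$ is indispensable here, because it pushes the apex to codimension at least three and thereby enables Hartogs-type extension across it, and because in this range the toric-combinatorial dictionary unambiguously identifies $\bfQ$-Gorenstein toric Kähler cones with toric diagrams of positive height.
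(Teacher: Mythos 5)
The survey states Theorem \ref{Sasaki4} without proof, citing \cite{CFO}, so there is no in-paper argument to compare against; I therefore assess your proposal on its own terms. Your (b)$\Leftrightarrow$(c) leg is the standard affine-toric computation ($K_{C(N)}=-\sum D_i$, principal divisors $(\chi^\gamma)=\sum\langle\gamma,u_i\rangle D_i$, trivial Picard group of an affine toric variety) and is essentially correct, modulo reconciling your sign convention with Definition \ref{slag2}, where the normalization is $\langle\gamma,\lambda_i\rangle=-1$ with $\ell\gamma$ primitive integral. The (b)$\Rightarrow$(a) direction is also in the right spirit: a $T$-invariant trivializing section of $\mathcal K^{\otimes\ell}_{C(N)}$ produces, via the Calabi--Yau-type identity, a representative of $c_1^B$ proportional to $[\omega^T]$; note, however, that the positivity of the proportionality constant comes from the Reeb vector lying in the interior of the dual cone (so that it pairs with the correct sign against $\gamma$), not from ``the positivity of the height'' per se.

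The genuine gap is in (a)$\Rightarrow$(b). From $c_1(D)=0$ you only get that $c_1(\mathcal K_{C(N)})$ vanishes in $H^2(C(N)\setminus\{\mathrm{apex}\};\bfR)$, i.e.\ that the integral Chern class is torsion. That does not by itself yield holomorphic triviality of any power of $\mathcal K$ on the punctured cone: a holomorphic line bundle with torsion $c_1$ need not be holomorphically torsion unless you control $H^1(\mathcal O)$ or produce a flat holomorphic structure. Hartogs extension cannot supply this; it only lets you extend a nowhere-vanishing section across the apex \emph{after} you have one on the punctured cone (and for that, complex codimension $2$, i.e.\ normality, already suffices, so your explanation of the role of $\dim N\ge 5$ is not the right one). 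The argument in \cite{CFO}/\cite{FOW} instead uses the full hypothesis (a): by Proposition \ref{Sasaki2}, $c_1^B=\kappa[\omega^T]$ with $\kappa>0$, which after a D-homothetic rescaling makes the Ricci form of the cone metric globally $i\partial\barpartial$-exact and hence equips $\mathcal K_{C(N)}$ with a flat Hermitian connection; the monodromy is then a character of $\pi_1(N)\cong\bfZ_{\mathfrak g}/\mathcal L$ (Lerman), which is finite, so some power $\mathcal K^{\otimes\ell}$ is holomorphically trivial on the punctured cone and extends across the apex. Your proposal never uses the hypothesis $c_1^B>0$ and never confronts this monodromy/flatness issue, which is the actual content of the implication.
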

Next we wish to explain what the height is. For simplicity we put $n:= m+1$. 
A toric K\"ahler cone of complex dimension $n$ is a K\"ahler cone 
having a Hamiltonian $T^n$-action. Its moment map image is a ``good
rational polyhedral cone'':
$$ C = \{ y \in \mathfrak g^\ast\ |\ \langle y,\lambda_i\rangle \ge 0,\ i=1, \cdots, d\}$$
where $G := T^n$, $\mathfrak g = \mathrm{Lie}(G)$. 
There is an algebraic description of $C$ due to Lerman \cite{Lerman}, but we omit its detail and just define
a good rational polyhedral cone as a moment cone obtained from a toric K\"ahler cone. 
As one of the algebraic properties we have 
$$\lambda_i \in \bfZ_\mathfrak g := \mathrm{Ker}\ \{\exp : \mathfrak g \to G\},$$
that is, every normal vector of facets is in the integral lattice of $\mathfrak g$. 
\begin{definition}[Toric diagram of height $\ell$]\label{slag2}
We say $\{\lambda_i\}_{i=1}^d \subset \bfZ^n \cong \bfZ_{\mathfrak g}$ and 
$\gamma \in \bfQ^n \cong (\bfQ_\mathfrak g)^\ast$ define 
a toric diagram of height $\ell$ if
\begin{enumerate}
\item[(1)]
$ C = \{ y \in \mathfrak g^\ast\ |\ \langle y,\lambda_i\rangle \ge 0,\ i=1, \cdots, d\}$
is a good rational polyhedral cone (i.e. the moment map image of a toric K\"ahler cone).
\item[(2)]
$\ell$ is the largest integer such that $\ell\gamma$ is in $\bfZ^n$ and primitive.
\item[(3)]
$\langle \gamma, \lambda_i \rangle = -1$.
\end{enumerate}
\end{definition}
Using an element of $SL(m+1,\bfZ)$ we can transform $\gamma$ and $\lambda_j$'s so that
$$ \gamma = \left(\begin{array}{r} -\frac1\ell \\ 0 \\ \vdots \\ 0 \end{array}\right)$$
and all $\lambda_j$ are of the form
$$ \lambda_j = \left(\begin{array}{r} \ell \\ \vdots \\ \vdots \end{array}\right).$$
This is the reason why we call ``height $\ell$''.

The height is related to the fundamental group of $N$. 
Let $\mathcal L$ be the subgroup of $\bfZ_{\mathfrak g}$ generated by
$\lambda_1, \cdots, \lambda_d$. Then by Lerman \cite{Lerman02} we have 
$$\pi_1(N) \cong \bfZ_{\mathfrak g}/\mathcal L.$$
Note that
$\bfZ_{\mathfrak g}/\mathcal L$ is not trivial if $\ell > 1$. Thus
if $N$ is a compact connected toric Sasaki manifold 
associated with a toric diagram of height $\ell > 1$, then $N$ is not simply
connected.
The converse is not true as the following example shows. If we take 
$$
\lambda_1 = \left(\begin{array}{c} 1 \\ 0 \\ 0\end{array}\right), 
\lambda_2 = \left(\begin{array}{c} 1 \\ 2 \\ 1\end{array}\right), 
\lambda_3 = \left(\begin{array}{c} 1 \\ 3 \\ 4 \end{array}\right)
$$
then $\pi_1(N) = \bfZ_5$. 

\section{The construction of Special Lagrangian 
submanifolds and Lagrangian self-shrinkers in 
toric Calabi-Yau cones.}

Recall from the previous section that 
a Sasaki manifold $(N,g)$ of dimension $2m+1$ is a toric Sasaki-Einstein manifold 
if 
the cone $(C(N), \barg)$ is a toric Calabi-Yau cone,
i.e. Ricci-flat toric K\"ahler cone of complex dimension $m+1=:n$.
The apex of toric K\"ahler cone $C(N)$ is $\bfQ$-Gorenstein singularity 
if and only if
the moment cone comes from a toric diagram of height $\ell$.
This is equivalent to say that $K_{C(N)}^{\otimes\ell}$ is trivial.
In particular $C(N)$ is Calabi-Yau if $\ell = 1$.
From now on, we shall have toric diagrams of {\it height $1$} in mind even if we use
general $\ell$.
By combining Theorem \ref{Sasaki3} and Theorem \ref{Sasaki4} we obtain the following.
\begin{theorem}[\cite{FOW}, \cite{CFO}]\label{slag4}
For every toric cone manifold $C(N)$ coming from 
toric diagram of height $\ell$, there exists a Ricci-flat
K\"ahler cone metric.
(Equivalently, $N$ admits a Sasaki-Einstein metric.)
\end{theorem}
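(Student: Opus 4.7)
The plan is to chain Theorem \ref{Sasaki4} with Theorem \ref{Sasaki3}, and then translate the resulting Sasaki-Einstein metric into a Ricci-flat K\"ahler cone metric via Proposition \ref{Sasaki1}. All the hard analysis (existence of transverse K\"ahler-Ricci solitons, volume minimization for the Reeb field) has been absorbed into those earlier statements, so the role of this proof is essentially to verify that the inputs of one theorem match the outputs of the previous one.

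First I would take the given toric diagram $(\{\lambda_i\}_{i=1}^d,\gamma)$ of height $\ell$ and check that it really produces a compact toric Sasaki manifold $N$. Item (1) of Definition \ref{slag2} says that the polyhedral cone $C$ is good, so by Lerman's description it is the moment cone of a toric K\"ahler cone $C(N)$, whose link $N$ is a compact toric Sasaki manifold. Thus $N$ satisfies condition (c) of Theorem \ref{Sasaki4}.

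Next, the implication (c) $\Rightarrow$ (a) of Theorem \ref{Sasaki4} gives $c_1^B(\nu(\mathcal F_\xi)) > 0$ and $c_1(D) = 0$ on $N$. These are exactly the hypotheses of Theorem \ref{Sasaki3}, which then provides a deformed Sasaki structure on $N$ admitting a Sasaki-Einstein metric. By Proposition \ref{Sasaki1}, the Sasaki-Einstein condition on $N$ is equivalent to the cone $(C(N),\overline g)$ being Ricci-flat K\"ahler, which is the conclusion we want.

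The one point that merits a little care, rather than a genuine obstacle, is the interpretation of the word ``deformation'' in Theorem \ref{Sasaki3}: we must make sure that the Ricci-flat K\"ahler metric produced at the end lives on the same cone manifold $C(N)$ that we started with. This is fine because the deformation only rotates the Reeb vector field $\xi$ within the Lie algebra of the torus $T^n \subset \mathrm{Aut}(C(N))$, in the manner of the Martelli-Sparks-Yau volume (Z-) minimization; the underlying complex toric variety $C(N)$ determined by the original toric diagram is unchanged. Consequently the Ricci-flat K\"ahler cone metric really sits on the cone specified by the input toric diagram, completing the chain.
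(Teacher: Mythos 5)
Your proposal is correct and follows exactly the route the paper itself indicates: the theorem is stated as an immediate consequence of combining Theorem \ref{Sasaki4} (to pass from the height-$\ell$ toric diagram to the conditions $c_1^B>0$ and $c_1(D)=0$) with Theorem \ref{Sasaki3} (to produce the Sasaki-Einstein metric), and Proposition \ref{Sasaki1} to translate this into Ricci-flatness of the cone. Your added remark that the Reeb-field deformation leaves the underlying toric complex manifold $C(N)$ unchanged is a sensible clarification of a point the paper leaves implicit.
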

Suppose $\ell = 1$. Then one can show that 
$$ \Omega = e^{-\sum \gamma_iz^i} dz^1 \wedge \cdots \wedge dz^n $$
is a parallel holomorphic $n$-form, see \cite{CFO} for the proof.
This implies that 
$$ \frac {\omega^n}{n!} = (-1)^{\frac{n(n-1)}2} \binom{\sqrt{-1}}{2}^n \Omega\wedge
\overline{\Omega}$$
for some K\"ahler form $\omega$.
If 
$$ \gamma = \left(\begin{array}{c} -1 \\ 0 \\ \vdots \\ 0 \end{array}\right),$$
then we have 
$$ \Omega = e^{z^1} dz^1 \wedge \cdots \wedge dz^n. $$

The following definitions and theorems  \ref{slag6} -- \ref{slag7} are due to Harvey-Lawson \cite{HarveyLawson82Acta}, and
now are standard in differential geometry.
\begin{definition}\label{slag6}Let $(M,g)$ be a Riemannian manifold. 
A closed k-form $\Omega$ on $(M,g)$ is a calibration
if for any $p\in M$ and for any $k$-dimensional subspace $V \subset T_pM$,
$$ \Omega|V \le dvol_V(g).$$
If a $k$-submanifold $L$ satisfies $\Omega|L = dvol_L(g)$, 
$L$ is called a calibrated submanifold.
\end{definition}
\begin{theorem}
A closed calibrated submanifold minimizes volume in its homology class.
\end{theorem}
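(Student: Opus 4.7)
The plan is to exploit the two defining properties of a calibration—closedness and the pointwise comparison $\Omega|V \le dvol_V(g)$—together with Stokes' theorem. The key observation is that because $\Omega$ is closed, its integral over a closed submanifold depends only on the homology class, so we can transfer the computation from a competitor $L'$ to the calibrated $L$.

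Concretely, let $L$ be a closed $k$-dimensional calibrated submanifold and let $L'$ be any closed $k$-submanifold homologous to $L$; write $L-L' = \partial W$ for some $(k+1)$-chain $W$. First I would compute
\[
\int_L \Omega - \int_{L'} \Omega = \int_{\partial W} \Omega = \int_W d\Omega = 0,
\]
using Stokes' theorem and $d\Omega = 0$. Next, by the calibrated condition $\Omega|L = dvol_L(g)$, the left-hand integral equals $\mathrm{vol}(L)$. On the other hand, applying the pointwise inequality $\Omega|T_pL' \le dvol_{L'}(g)$ at each $p \in L'$ and integrating gives $\int_{L'} \Omega \le \mathrm{vol}(L')$. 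Chaining these yields
\[
\mathrm{vol}(L) = \int_L \Omega = \int_{L'} \Omega \le \mathrm{vol}(L'),
\]
which is the desired conclusion.

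There is no genuine obstacle in the smooth setting—the whole argument is a two-line application of Stokes plus the pointwise calibration inequality. The only points requiring mild care are (i) choosing the right category of chains so that $L$ and $L'$ actually cobound (e.g. integral currents or smooth singular chains with appropriate orientations), and (ii) orienting $L$ so that $\Omega|L = dvol_L(g)$ rather than $-dvol_L(g)$; reversing orientation if necessary settles this. Once these bookkeeping matters are fixed, the argument goes through verbatim in any reasonable homology theory in which Stokes' theorem holds.
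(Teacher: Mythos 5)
Your proof is correct and is exactly the standard Harvey--Lawson argument: closedness of $\Omega$ plus Stokes makes $\int\Omega$ a homology invariant, and the pointwise comparison converts this into the volume inequality. The paper itself gives no proof, simply citing Harvey--Lawson, and your argument is the canonical one found there.
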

\begin{theorem}
Let $(M,\Omega, g)$ be a Calabi-Yau manifold.
Then ${\mathrm Re}\Omega$ defines a calibration. 
\end{theorem}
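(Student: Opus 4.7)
The plan is to reduce the calibration inequality to a pointwise linear algebra statement on $T_pM \cong \bfC^n$, and then apply an AM-GM argument to a positive semidefinite Hermitian matrix. First I would note that $\mathrm{Re}\,\Omega$ is closed: since $\Omega$ is holomorphic of top degree, $\bp\Omega = 0$ trivially and $\p\Omega$ vanishes (no room in the $(n+1,0)$-slot), so $d\Omega = 0$.

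Next, fix $p \in M$. Using the Calabi-Yau normalization $\o^n/n! = (-1)^{n(n-1)/2}(\sqrt{-1}/2)^n\,\Omega\wedge\overline{\Omega}$, I would choose unitary complex coordinates $z^1,\dots,z^n$ at $p$ so that the Hermitian metric is standard and $\Omega_p = dz^1\wedge\cdots\wedge dz^n$. Given any oriented real $n$-plane $V \subset T_pM$ and any orthonormal basis $e_1,\dots,e_n$ of $V$, form the $n\times n$ complex matrix $M$ with entries $M_{kj} = dz^k(e_j)$. By definition,
\begin{equation*}
\Omega_p(e_1,\dots,e_n) \;=\; \det M.
\end{equation*}
So the task reduces to showing $|\det M| \le 1$, since then $\mathrm{Re}\,\Omega_p(e_1,\dots,e_n) \le |\det M| \le 1 = dvol_V(e_1,\dots,e_n)$.

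For the bound, write $M = A + iB$ with $A,B$ real matrices whose entries are $dx^k(e_j)$ and $dy^k(e_j)$. Orthonormality of $\{e_j\}$ with respect to $g = \sum_k (dx^k\otimes dx^k + dy^k\otimes dy^k)$ at $p$ becomes $A^TA + B^TB = I$. A direct computation yields
\begin{equation*}
M^*M \;=\; (A^T - iB^T)(A + iB) \;=\; I + i(A^TB - B^TA),
\end{equation*}
and the matrix $A^TB - B^TA$ is real antisymmetric, hence has trace zero. Thus $M^*M$ is positive semidefinite Hermitian with $\tr(M^*M) = n$. Letting $\lambda_1,\dots,\lambda_n \ge 0$ be its eigenvalues, we have $\sum\lambda_j = n$, and AM-GM gives
\begin{equation*}
|\det M|^2 \;=\; \det(M^*M) \;=\; \prod_j \lambda_j \;\le\; \Bigl(\tfrac{1}{n}\textstyle\sum_j \lambda_j\Bigr)^n \;=\; 1.
\end{equation*}

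The only subtle point is recognizing that the real orthonormality of the basis $\{e_j\}$ forces $M^*M$ to have the nice form $I + i\,(\text{antisymmetric real})$, which automatically has trace $n$; after that, AM-GM closes the argument in one line. Equality $|\det M| = 1$ forces $M^*M = I$, i.e.\ $A^TB = B^TA$, which is exactly the Lagrangian condition $\o|_V = 0$; then $\det M$ has unit modulus and equals $1$ precisely when $V$ has zero phase, recovering Harvey-Lawson's characterization of special Lagrangian subspaces as the contact set of the calibration $\mathrm{Re}\,\Omega$.
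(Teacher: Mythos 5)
Your proof is correct. Note that the paper itself offers no proof of this statement: it is quoted as a standard result of Harvey--Lawson \cite{HarveyLawson82Acta}, so there is no in-paper argument to compare against. Your reduction to the pointwise inequality $|\det M|\le 1$, with $M^*M = I + i(A^TB - B^TA)$ having trace $n$ and AM--GM finishing the bound, is a complete and clean rendering of the classical argument (closedness of $\mathrm{Re}\,\Omega$ is immediate as you say, and the normalization $\omega^n/n! = (-1)^{n(n-1)/2}(\sqrt{-1}/2)^n\,\Omega\wedge\overline{\Omega}$ does let you arrange $\Omega_p = dz^1\wedge\cdots\wedge dz^n$ after absorbing a unit phase into the coordinates); your equality analysis correctly recovers the special Lagrangian planes as the contact set.
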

\begin{definition}\label{slag7}
A calibrated submanifold $L$ for ${\mathrm Re}\Omega$ on a Calabi-Yau manifold 
$(M,\Omega, g)$ is called 
a special Lagrangian submanifold:
$$\dim L = n, \omega|_L = 0, {\mathrm and} {\mathrm Im}\Omega|_L = 0.$$
\end{definition}

\begin{theorem}[\cite{FHY12}]\label{slag5}
With $\ell = 1$, there is a $T^{n-1}$-invariant
special Lagrangian submanifold $L$ described as
$$ L = \mu^{-1}(c) \cap \{(e^{z^1} + (-1)^n e^{\overline{z^1}})/i^n = c'\}$$
where $T^{n-1}$ is generated by $\mathrm{Im}(\p/\p z^2), \cdots, \mathrm{Im}(\p/\p z^n)$
and $\mu : C(N) \to \mathfrak t^{n-1 \ast}$ the moment map. 
\end{theorem}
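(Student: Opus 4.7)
The plan is to verify, in order, the three conditions in Definition \ref{slag7} for $L$ to be a special Lagrangian submanifold: $\dim L = n$, $\omega|_L = 0$, and $\mathrm{Im}\,\Omega|_L = 0$. This is the cone analogue of Harvey-Lawson's construction in $\bfC^n$ \cite{HarveyLawson82Acta}; the extra factor $e^{z^1}$ in $\Omega = e^{z^1}\,dz^1\wedge\cdots\wedge dz^n$, compared to the flat case $\Omega = dz^1\wedge\cdots\wedge dz^n$, is exactly why the cutting equation is $(e^{z^1}+(-1)^n e^{\overline{z^1}})/i^n=c'$ rather than simply $\mathrm{Im}\,z^1=c'$.

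For dimension and the Lagrangian condition, a regular $c$ makes $\mu^{-1}(c)$ a smooth submanifold of real dimension $2n-(n-1)=n+1$. Using $\overline{1/i^n}=(-1)^n/i^n$, the defining equation rewrites as $\mathrm{Re}(e^{z^1}/i^n)=c'/2$, which depends only on $z^1$ and is transverse to $T\mu^{-1}(c)$ since the $(x^1,y^1)$-plane sits in the latter's tangent space; hence for regular $c'$ the intersection is $n$-dimensional. Because this equation is $T^{n-1}$-invariant, $L$ is $T^{n-1}$-invariant and contains the $(n-1)$-dimensional orbits. Now the symplectic form on the coisotropic submanifold $\mu^{-1}(c)$ has null space exactly the orbit directions, so at each point of $L$ the tangent space decomposes into the orbit space plus one transverse direction $v$; the form $\omega$ vanishes on any pair involving an orbit vector and on $(v,v)$ by antisymmetry, so $\omega|_L=0$.

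For the special condition, I compute $\Omega$ on the frame $(v,X_2,\ldots,X_n)$ with $X_j=\mathrm{Im}(\partial/\partial z^j)=-\tfrac12\partial/\partial y^j$. Since $dz^1(X_k)=0$ and $dz^j(X_k)=-i\delta_{jk}/2$ for $j,k\ge 2$, expansion along the first column of the relevant determinant gives
\[
\Omega(v,X_2,\ldots,X_n)\;=\;e^{z^1}\,dz^1(v)\cdot(-i/2)^{n-1},
\]
so (using $1/(-i)^{n-1}=i^{n-1}$) the condition $\mathrm{Im}\,\Omega|_L=0$ reduces to $e^{z^1}\,dz^1(v)\in i^{n-1}\bfR$. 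But $v$ is tangent to $\{\mathrm{Re}(e^{z^1}/i^n)=c'/2\}$, which forces $e^{z^1}/i^n\,dz^1(v)\in i\bfR$, i.e. $e^{z^1}\,dz^1(v)\in i^{n+1}\bfR=i^{n-1}\bfR$. These match exactly, proving $L$ is special Lagrangian. The main obstacle is precisely this phase bookkeeping: the factors $(-1)^n$ and $i^n$ in the defining equation are chosen so that the phase cut out by it matches the phase $(-i)^{n-1}$ that $\Omega$ already carries along the $T^{n-1}$-orbit. One should additionally check nonemptiness and the range of admissible $(c,c')$ producing a bona fide submanifold, which is a standard moment-polytope argument once the apex of the cone has been removed.
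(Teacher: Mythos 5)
The survey states Theorem \ref{slag5} without proof, deferring to \cite{FHY12}; your argument is precisely the generalized Harvey--Lawson moment-map construction used there, and the two computations that carry the proof --- $\omega|_L=0$ from coisotropy of $\mu^{-1}(c)$ together with isotropy of the $T^{n-1}$-orbits, and the phase match between $\Omega(v,X_2,\dots,X_n)=e^{z^1}\,dz^1(v)\,(-i/2)^{n-1}$ and the tangency condition $e^{z^1}\,dz^1(v)\in i^{n+1}\bfR=i^{n-1}\bfR$ --- are both correct. So this is essentially the same route, correctly executed.

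One sub-justification is wrong even though its conclusion is right: $\p/\p x^1$ is in general \emph{not} tangent to $\mu^{-1}(c)$. On the open $(\bfC^*)^n$-orbit write $\omega=\sum_{j,k}F_{jk}\,dx^j\wedge dy^k$ with $F$ the symplectic potential; then $d\mu_k$ is proportional to $\sum_j F_{jk}\,dx^j$ for $k\ge 2$, so $T\mu^{-1}(c)$ contains all $\p/\p y^j$ but, among the $x$-directions, only the line of vectors $a$ with $(Fa)_k=0$ for $k\ge2$. Positive definiteness of $(F_{jk})$ forces $a_1\ne0$ on that line, and this (or simply $\p/\p y^1$ at points where $\mathrm{Im}(e^{z^1}/i^n)\ne 0$) is what actually gives transversality of $\{\mathrm{Re}(e^{z^1}/i^n)=c'/2\}$ to $\mu^{-1}(c)$; your appeal to regular values of the restricted function also covers this. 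With that repair, and the deferred check that the action is locally free along $L$ and the level sets are nonempty, the proof is complete.
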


\begin{example} If $N = S^{2n-1}$ then we have  $C(N) = \bfC^n - \{\bfo\}$. Let $w^1, \cdots, w^n$ be the
holomorphic coordinates on $\bfC^n$. In this case, the conditions in Theorem \ref{slag5} are equivalent to
\begin{enumerate}
\item[(1)]
$|w^j|^2 - |w^1|^2 = c_j,\ j=2, \cdots, n$.
\item[(2)]
If $n$ is even, $\mathrm{Re}(w^1 \cdots w^n) = c'$.\\
If $n$ is odd, $\mathrm{Im}(w^1 \cdots w^n) = c'$.
\end{enumerate}
This special Lagrangian submanifold is due to Harvey-Lawson \cite{HarveyLawson82Acta}.
\end{example}

See also Kawai \cite{Kawai11} for a similar construction.

\begin{theorem}[Yamamoto \cite{Yamamoto}]Let $g$ be any non-negative integer and $\Sigma_g$ be a 
compact surface of genus $g$. Then there exists a
toric Calabi-Yau cone $C(N)$ of height $1$ with $\dim_\bfC C(N) = 3$ such that  
in $C(N)$ 
\begin{enumerate}
\item[(1)]there exist special Lagrangian submanifolds diffeomorphic to $\Sigma_g \times \bfR$ and 
\item[(2)]there exist Lagrangian self-shrinkers diffeomorphic to $\Sigma_g \times S^1$.
\end{enumerate}
\end{theorem}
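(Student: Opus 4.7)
The plan is to apply Theorem \ref{slag5} in the case $n = 3$ and to choose the underlying toric diagram so that the topology of the resulting submanifold is controlled combinatorially. A height-one toric diagram for a Calabi--Yau cone of complex dimension three is, after normalizing so that the first coordinate of each $\lambda_j$ is one, specified by a convex lattice polygon $P \subset \bfZ^2$ whose vertices are the projections of $\lambda_1, \ldots, \lambda_d$. Theorem \ref{slag4} equips the resulting $C(N)$ with a Ricci-flat K\"ahler cone metric, and Theorem \ref{slag5} then produces a $T^2$-invariant special Lagrangian $L \subset C(N)$ as the intersection of a $T^2$-moment map level set with the holomorphic slice $\{(e^{z^1} - e^{\overline{z^1}})/(-i) = c'\}$.

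To prove (1), the first step is to analyze the topology of $L$. The $T^2$-action on $L$ has a one-dimensional quotient, realized as a noncompact arc $\Gamma$ in the moment image; over the interior of $\Gamma$ the fibers are generic $T^2$-orbits, while at points where $\Gamma$ meets a codimension-one facet of the moment cone one circle of the $T^2$ degenerates. Each facet corresponds to an edge of $P$, and the pattern of degenerations along a loop transverse to $\Gamma$ produces a closed surface. A combinatorial genus count, relating the number of interior lattice points of $P$ to the resulting genus in the spirit of the Khovanskii--Pick formula for toric curves, shows that one can arrange $L \cong \Sigma_g \times \bfR$ by choosing $P$ with exactly $g$ interior lattice points, which is possible for every $g \geq 0$.

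For (2), I would replace the holomorphic slice used in Theorem \ref{slag5} by the self-shrinker condition $\vec H = -\lambda \ArrowF^\perp$ imposed on a $T^2$-invariant Lagrangian. Using the $T^2$-symmetry together with the toric K\"ahler structure, this condition reduces to a second-order ODE for a generating curve $\Gamma_0$ in the moment polytope image, whose solutions correspond to $T^2$-invariant Lagrangian self-shrinkers. One then verifies the existence of a closed solution $\Gamma_0$ lying in the interior of the polygon, for example via a shooting or phase-plane argument adapted to the weighted system. The corresponding $T^2$-bundle $L'$ over $\Gamma_0$ inherits the same facet-degeneration pattern as $L$, so the genus count of (1) applies again and yields $L' \cong \Sigma_g \times S^1$. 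The main obstacle is this last existence statement: whereas in (1) the generating curve $\Gamma$ appears for free as an integral curve of a first-integral system, producing a genuinely closed $\Gamma_0$ for the self-shrinker ODE requires careful analysis to ensure the orbit both closes up and avoids all facets of $\mu(C(N))$.
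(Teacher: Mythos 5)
First, a caveat: the survey contains no proof of this statement---it is imported wholesale from Yamamoto's preprint \cite{Yamamoto}---so there is no in-text argument to compare yours against, and I can only judge the proposal on its own terms. Judged that way, it has a fatal topological gap. Both halves of your construction produce $T^2$-invariant $3$-dimensional Lagrangians: in (1) the submanifold $L$ of Theorem \ref{slag5}, and in (2) a ``$T^2$-bundle $L'$ over $\Gamma_0$.'' A connected $3$-manifold admitting an effective $T^2$-action with $2$-dimensional principal orbits fibers, away from finitely many degenerate fibers, in tori over a $1$-manifold, and for an effective torus action on an aspherical manifold the evaluation map embeds $\pi_1(T^2)\cong\bfZ^2$ into the \emph{center} of $\pi_1$ (Conner--Raymond/Gottlieb). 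Since $\pi_1(\Sigma_g\times S^1)=\pi_1(\Sigma_g)\times\bfZ$ has center $\bfZ$ and $\pi_1(\Sigma_g\times\bfR)=\pi_1(\Sigma_g)$ is centerless for $g\ge 2$, neither $\Sigma_g\times S^1$ nor $\Sigma_g\times\bfR$ carries an effective $T^2$-action once $g\ge 2$ (and $\Sigma_0\times\bfR=S^2\times\bfR$ fails as well, by a Smith-theory fixed-point argument). The manifolds your construction can actually reach are exhausted by $T^2\times\bfR$, $S^1\times\bfR^2$, $T^3$, torus bundles over $S^1$, $S^3$, lens spaces and $S^2\times S^1$. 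Consequently the proposed ``combinatorial genus count relating interior lattice points of $P$ to the genus'' cannot exist: the quotient $L/T^2$ is a $1$-manifold and no choice of polygon produces genus $\ge 2$. Indeed, for $N=S^5$ Theorem \ref{slag5} reproduces exactly the Harvey--Lawson $T^2$-cones of \cite{HarveyLawson82Acta}, consistent with genus $\le 1$.

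This is not a repairable defect within your framework; the theorem forces one to discard most of the torus symmetry. The route in \cite{Yamamoto} is of a different nature: one first produces a special Lagrangian \emph{cone} whose link is a minimal Legendrian surface $\Sigma_g$ inside the Sasaki--Einstein $5$-manifold $N$, the genus being governed by the combinatorics of the height-one polygon through a gluing of toric pieces rather than through a global $T^2$-fibration; this gives (1) as $C(\Sigma_g)\cong\Sigma_g\times\bfR$. Part (2) is then an Anciaux-type construction: one moves $\Sigma_g$ along a curve in the two-plane spanned by $r\,\partial/\partial r$ and the Reeb field, the self-shrinker equation reducing to a second-order ODE for that curve, and a closed solution yields the compact shrinker $\Sigma_g\times S^1$. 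Your second, acknowledged gap---the existence of a closed generating curve for the self-shrinker ODE---is therefore also a genuine piece of work (it is the cone analogue of Anciaux's phase-plane analysis in $\bfC^n$), but it is secondary to the topological obstruction above: even with a closed orbit in hand, your $L'$ could never have the fundamental group of $\Sigma_g\times S^1$ for $g\ge 2$.
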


\section{The diameter of compact self-shrinkers for mean curvature flow}

Let $x:M \to \R^{n+p}$ be an $n$-dimensional submanifold in
the (n+p)-dimensional Euclidean space. If we let the position
vector $x$ evolve in the direction of the mean curvature $\vec{H}$,
then it gives rise to a solution to the mean curvature flow :
$$
x:M\times [0,T)\rightarrow \R^{n+p}, \qquad \frac{\partial
x}{\partial t} = \vec{H}.
$$

We call the immersed manifold $M$ a self-shrinker if it satisfies
the quasilinear elliptic system (see \cite{H}, or \cite{CM}): for some positive constant $\lambda$,
$$
\vec{H}=-\lambda x^{\perp},
$$
where $\perp$ denotes the projection onto the normal bundle of $M$.

We have (see \cite{LW})
$$
\frac 1 {2\lambda} |\vec{H}|^2+\frac{1}{4}\Delta |x|^2=\frac{n}{2}.
$$
Put
$$\phi := 2\lambda(\frac{|x|^2}4 -\frac{n}{4\lambda}).
\eqno (15)
$$
Define the Witten-Laplacian by
$$
\Delta_\phi=\Delta- \nabla\phi\cdot\nabla.
$$
From above formulas, we can check
$$
\begin{array}{lcl}
\Delta_\phi(\frac{1}{4}|x|^2)&=&\Delta (\frac{1}{4}|x|^2)-\frac{\lambda}{8}\nabla|x|^2\cdot \nabla|x|^2\\
&=&\frac{n}{2}- \frac 1{2\lambda}|\vec{H}|^2-\frac{\lambda}2|x^T|^2\\
&=&\frac{n}{2}-\frac{\lambda}{2}|x|^2.
\end{array}
$$
\noindent
Thus we have
$$
\Delta_\phi(\frac{1}{4}|x|^2-\frac{n}{4\lambda})=
-2\lambda(\frac{|x|^2}4-\frac{n}{4\lambda}).
$$

Thus we have proved

\begin{theorem}[\cite{FLL}]\label{th3} In the above situation we have the eigenvalue $2\lambda$ of  the Witten-Laplacian $\Delta_\phi$ with
 eigenfunction $\phi$:
$$
\Delta_\phi\phi=
-2\lambda \phi.
$$
\end{theorem}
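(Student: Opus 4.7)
The plan is to reduce the claim to two calculations that are already essentially indicated in the lead-up to the statement: a Bochner-type identity for $|x|^2$ on the self-shrinker, and a routine expansion of $\Delta_\phi$ applied to $|x|^2/4$. Since $\phi$ differs from $|x|^2/4 - n/(4\lambda)$ only by the positive constant factor $2\lambda$, and constants are killed by $\Delta_\phi$, it suffices to verify
\[
\Delta_\phi\!\left(\frac{|x|^2}{4}-\frac{n}{4\lambda}\right) = -2\lambda\!\left(\frac{|x|^2}{4}-\frac{n}{4\lambda}\right).
\]

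First I would establish the identity $\frac{1}{2\lambda}|\vec H|^2+\frac{1}{4}\Delta|x|^2=\frac{n}{2}$. The key ingredient is that for any isometric immersion $x:M\to\bfR^{n+p}$ one has $\Delta x=\vec H$ componentwise, which gives
\[
\tfrac{1}{4}\Delta|x|^2 = \tfrac{1}{2}|\nabla^M x|^2 + \tfrac{1}{2}\langle x,\vec H\rangle = \tfrac{n}{2}+\tfrac{1}{2}\langle x^\perp,\vec H\rangle.
\]
The self-shrinker equation $\vec H=-\lambda x^\perp$ then turns $\langle x^\perp,\vec H\rangle$ into $-\tfrac{1}{\lambda}|\vec H|^2$, which yields the stated identity.

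Next I would compute $\Delta_\phi(|x|^2/4)$ directly. Since $\phi=\tfrac{\lambda}{2}|x|^2$ up to a constant, $\nabla\phi=\tfrac{\lambda}{2}\nabla|x|^2$, so
\[
\Delta_\phi\!\left(\tfrac{|x|^2}{4}\right) = \tfrac{1}{4}\Delta|x|^2 - \tfrac{\lambda}{8}\bigl|\nabla|x|^2\bigr|^2.
\]
Now $\nabla|x|^2 = 2 x^T$, the tangential component of $x$, so $|\nabla|x|^2|^2 = 4|x^T|^2$ and the second term becomes $-\tfrac{\lambda}{2}|x^T|^2$. Substituting the first step gives
\[
\Delta_\phi\!\left(\tfrac{|x|^2}{4}\right) = \tfrac{n}{2}-\tfrac{1}{2\lambda}|\vec H|^2-\tfrac{\lambda}{2}|x^T|^2.
\]

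Finally I would invoke the orthogonal decomposition $|x|^2=|x^T|^2+|x^\perp|^2$ together with $|x^\perp|^2=\tfrac{1}{\lambda^2}|\vec H|^2$ (again from the self-shrinker equation). This collapses the last two terms into $-\tfrac{\lambda}{2}|x|^2$, yielding $\Delta_\phi(|x|^2/4)=\tfrac{n}{2}-\tfrac{\lambda}{2}|x|^2$. Subtracting the constant $n/(4\lambda)$ on the left has no effect on $\Delta_\phi$, so
\[
\Delta_\phi\!\left(\tfrac{|x|^2}{4}-\tfrac{n}{4\lambda}\right) = -2\lambda\!\left(\tfrac{|x|^2}{4}-\tfrac{n}{4\lambda}\right),
\]
and multiplying both sides by $2\lambda$ gives $\Delta_\phi\phi=-2\lambda\phi$. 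There is no genuine obstacle here; the only thing to watch is the bookkeeping between $x^T$ and $x^\perp$, and the systematic use of $\vec H=-\lambda x^\perp$ to convert normal data into mean curvature data.
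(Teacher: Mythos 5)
Your proof is correct and follows essentially the same route as the paper, which establishes the theorem by the displayed computation of $\Delta_\phi(\tfrac14|x|^2)$ immediately preceding the statement. The only difference is that you also derive the identity $\tfrac{1}{2\lambda}|\vec H|^2+\tfrac14\Delta|x|^2=\tfrac n2$ from $\Delta x=\vec H$, whereas the paper simply cites it from \cite{LW}; your derivation and the subsequent bookkeeping with $x^T$ and $x^\perp$ are accurate.
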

Thus we have obtained an eigenvalue similar to (\ref{sol6}), and can hope to get a diameter estimate similar to the case of Ricci solitons.
In fact we can show the following theorem.

\begin{theorem}[\cite{FLL}]\label{th4} Let $x:M\to R^{n+p}$ be an $n$-dimensional compact self-shrinker such that $x(M)$ is not minimal
submanifold in  $S^{n+p-1}(\sqrt{n/\lambda})$, and let
$h^\alpha_{ij}$ be the components of the the second fundamental form
of $M$. Then we have
$$
d\geq \frac{1}{\sqrt{\frac{3\lambda}{2}+\frac{1}{2}K_0}}\pi,
$$
where
$$
K_0:=\max_{1\leq i\leq n}[\sum\limits_{\alpha,k}h^\alpha_{ik}h^\alpha_{ki}].
$$
\end{theorem}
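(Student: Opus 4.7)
The plan is to repeat verbatim the three-step template of the proof of Theorem \ref{sol3}, with Theorem \ref{th3} playing the role of Step~1 and the Bakry-Emery-type eigenvalue bound of Theorems \ref{sol10}--\ref{sol11} playing the role of Step~2. Theorem \ref{th3} already tells us that the function $\phi = 2\lambda\bigl(|x|^2/4 - n/(4\lambda)\bigr)$ satisfies $\Delta_\phi \phi = -2\lambda \phi$, so $2\lambda$ is an eigenvalue of $-\Delta_\phi$. The hypothesis that $x(M)$ is not minimal in $S^{n+p-1}(\sqrt{n/\lambda})$ is precisely what guarantees $\phi \not\equiv 0$: if $|x|^2 \equiv n/\lambda$, then $x$ is normal to $M$ along $M$, and the self-shrinker equation $\vec{H} = -\lambda x^\perp = -\lambda x$ exactly expresses that $M$ sits minimally in that sphere. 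Hence $\phi$ is a non-constant eigenfunction, and the first nonzero eigenvalue satisfies $\lambda_1(\Delta_\phi) \leq 2\lambda$.

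The heart of the argument is the Bakry-Emery Ricci computation on the submanifold. Writing $\phi$ as the restriction of $F = \tfrac{\lambda}{2}|x|^2 - \tfrac{n}{2}$ from the ambient $\bfR^{n+p}$, the standard formula for the Hessian of a restricted function on a submanifold gives
$$\nabla\nabla \phi(X,Y) = \text{Hess}^{\bfR^{n+p}}\!F(X,Y) + \langle \nabla^{\bfR^{n+p}}\!F , h(X,Y)\rangle = \lambda\, g(X,Y) + \lambda\langle x^{\perp}, h(X,Y)\rangle.$$
The self-shrinker equation $\lambda x^{\perp} = -\vec{H}$ rewrites the last term as $-\langle \vec{H}, h(X,Y)\rangle = -H^\alpha h^\alpha_{ij}$. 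Combining with the Gauss-equation Ricci formula $\text{Ric}_{ij} = H^\alpha h^\alpha_{ij} - h^\alpha_{ik}h^\alpha_{kj}$, the $\vec{H}$-terms cancel and I obtain
$$(\text{Ric} + \nabla\nabla\phi)_{ij} \;=\; \lambda\, g_{ij} \;-\; h^\alpha_{ik}h^\alpha_{kj}.$$
The symmetric nonnegative form $B_{ij} := h^\alpha_{ik}h^\alpha_{kj}$ satisfies $v^T B v = \sum_k |h(v,e_k)|^2$, so choosing the orthonormal frame to diagonalise $B$ at each point one gets $\mu_{\max}(B) = K_0$; equivalently $\text{Ric}_\phi \ge (\lambda - K_0)\,g$ on $M$.

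With this curvature bound in hand, Theorem \ref{sol10} combined with Theorem \ref{sol11} (applied with $K = \lambda - K_0$) yields
$$\lambda_1(\Delta_\phi) \;\geq\; \sup_{s\in(0,1)}\Bigl\{\, 4s(1-s)\frac{\pi^2}{d^2} + s(\lambda - K_0)\,\Bigr\}.$$
Feeding in the upper bound $\lambda_1(\Delta_\phi) \leq 2\lambda$ from the first step and taking the simple (non-optimal but clean) value $s = 1/2$ gives
$$2\lambda \;\geq\; \frac{\pi^2}{d^2} + \frac{\lambda - K_0}{2},$$
which rearranges to $\pi^2/d^2 \leq \tfrac{3\lambda}{2} + \tfrac{K_0}{2}$, i.e.\ the desired diameter bound.

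The main obstacle is not any single deep estimate, but rather the Bakry-Emery Ricci computation: one must combine the Gauss equation with the extrinsic Hessian formula and observe the key cancellation of the $\vec{H}$-terms, which is exactly what makes the self-shrinker equation the mean-curvature-flow analogue of the gradient Ricci soliton equation. Once that cancellation is in place, the proof reduces essentially to a transcription of Steps 1--3 of Theorem \ref{sol3}.
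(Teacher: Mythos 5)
Your proposal is correct and follows exactly the strategy the paper outlines (the paper itself only states Theorem \ref{th4}, deferring the proof to \cite{FLL}): use Theorem \ref{th3} to exhibit $2\lambda$ as a nonzero eigenvalue of $-\Delta_\phi$, with the non-minimality hypothesis guaranteeing $\phi\not\equiv 0$, and then feed the Bakry--Emery Ricci lower bound into Theorems \ref{sol10} and \ref{sol11}. Your computation $\operatorname{Ric}+\nabla\nabla\phi=\lambda g - h^\alpha_{ik}h^\alpha_{kj}\ge(\lambda-K_0)g$ via the restricted-Hessian formula and the Gauss equation is the key step the survey omits, and your choice $s=1/2$ reproduces the stated constant exactly.
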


When $p=1$, we have
\begin{corollary}[\cite{FLL}]\label{cor} Let $x:M\to R^{n+1}$ be an $n$-dimensional compact self-shrinker such that $x(M)$ is not
$S^n(\sqrt{n/\lambda})$,  and let $\lambda_i$ be the principal
curvatures of $M$.  Then we have
$$
d\geq \frac{1}{\sqrt{\frac{3\lambda}{2}+\frac{1}{2}K_0}}\pi,
$$
where
$$
K_0:= \max_{p \in M}\max_{1\leq i\leq n}\lambda_i^2.
$$
\end{corollary}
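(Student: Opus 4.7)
My plan is to deduce Corollary \ref{cor} as a direct specialization of Theorem \ref{th4} to the codimension-one case $p=1$. The only substantive task is to rewrite the quantity
\[
K_0 = \max_{1 \le i \le n} \sum_{\alpha,k} h^\alpha_{ik} h^\alpha_{ki}
\]
appearing in Theorem \ref{th4} in terms of the principal curvatures $\lambda_i$, and then quote the conclusion of that theorem verbatim.

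First, observe that when $p=1$ the normal bundle of $M$ in $\bfR^{n+1}$ is a line bundle, so the second fundamental form has only one component $h^1_{ij}$, which is a symmetric $(0,2)$-tensor on $M$. At an arbitrary point $q \in M$, I would diagonalize this shape operator: choose an orthonormal tangent frame $\{e_1,\ldots,e_n\}$ of principal directions at $q$, so that $h^1_{ij}(q) = \lambda_i(q)\,\delta_{ij}$. A direct computation then gives, at the point $q$,
\[
\sum_{\alpha,k} h^\alpha_{ik} h^\alpha_{ki} \;=\; \sum_k h^1_{ik}\,h^1_{ki} \;=\; \sum_k \lambda_i\,\delta_{ik}\,\lambda_k\,\delta_{ki} \;=\; \lambda_i(q)^2.
\]
Taking first the maximum over $i$ and then over $q \in M$ (using compactness of $M$), the constant $K_0$ of Theorem \ref{th4} collapses exactly to $\max_{q \in M}\max_{1\le i\le n}\lambda_i(q)^2$, which matches the quantity defined in the corollary.

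Next, I would verify that the non-triviality assumption matches: Theorem \ref{th4} excludes the case where $x(M)$ is minimal in $S^{n+p-1}(\sqrt{n/\lambda})$, and for $p=1$ this means $x(M)$ is not a minimal hypersurface of $S^n(\sqrt{n/\lambda})$; but a compact minimal hypersurface of a round sphere that is itself a round sphere must coincide with the whole ambient sphere, so this reduces to the assumption $x(M) \ne S^n(\sqrt{n/\lambda})$ precisely as stated in Corollary \ref{cor}. (One should also note that the principal-curvature frame can be chosen measurably by standard arguments, which is enough to make the supremum $K_0$ a well-defined finite constant, since $M$ is compact and $|{\rm II}|$ is continuous.) Plugging the computed value of $K_0$ into the bound
\[
d \;\ge\; \frac{1}{\sqrt{\tfrac{3\lambda}{2} + \tfrac12 K_0}}\,\pi
\]
from Theorem \ref{th4} yields the stated inequality.

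The hard part is not really hard at all here: the genuine work has been done in Theorem \ref{th4} (which in turn rests on the eigenvalue identity of Theorem \ref{th3} and the Bakry--\'Emery-type estimate of Theorem \ref{sol10}). The only potential pitfall is making sure that the frame-dependent expression $\sum_{\alpha,k}h^\alpha_{ik}h^\alpha_{ki}$ in the statement of Theorem \ref{th4} is interpreted in a frame in which Theorem \ref{th4} is actually being applied; choosing the principal-curvature frame pointwise is the natural choice in the hypersurface case, and once this is done the corollary follows immediately.
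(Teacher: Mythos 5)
Your proposal is correct and follows exactly the route the paper intends: the corollary is stated as an immediate specialization of Theorem \ref{th4} to $p=1$, with $\sum_{\alpha,k}h^\alpha_{ik}h^\alpha_{ki}$ reducing to $\lambda_i^2$ in the principal frame and the non-minimality hypothesis reducing to $x(M)\neq S^n(\sqrt{n/\lambda})$ (note only that for $p=1$ the ``submanifold of $S^{n+p-1}$'' has full dimension $n$, so by compactness it is either all of $S^n$ or not contained in it at all, which is why the hypothesis collapses as stated). The paper offers no further argument, so your write-up supplies precisely the omitted verification.
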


\section{Eternal solutions to K\"ahler-Ricci flow}

Let $M$ be a toric Fano manifold of $\dim M = m$, and
$L \to M$ be a line bundle over $M$ with 
$K_M = L^{-p}$, $p \in \bfZ_+$.
The claim of this section is that we can construct K\"ahler-Ricci solitons on
$L^{-k}$ outside the zero section using Calabi ansatz starting with Sasaki-Einstein metrics on the associated $U(1)$-bundle of $L^{-k}$.

A K\"ahler-Ricci flow is a family  $\omega_t$ of K\"ahler forms satisfying
\begin{equation}\label{KRF1}
\frac d{dt} \omega_t = -  \rho(\omega_t)
\end{equation}
where $\rho(\omega)$ is the Ricci form of $\omega$.
A K\"ahler-Ricci soliton is a K\"ahler form $\omega$ satisfying
\begin{equation}\label{KRF2}
- \rho(\omega) = \lambda \omega + {\mathcal L}_X \omega
\end{equation}
for some holomorphic vector filed $X$ \\
where $\lambda = 1, 0$ or $-1$.
When 
$$ \mathcal L_X \omega = i \partial\barpartial u$$
for some real function $u$, we say that the K\"ahler-Ricci soliton
is a gradient K\"ahler-Ricci soliton.
According as $\lambda = 1, 0$ or $-1$ 
the soliton is said to be expanding, steady and shrinking.

Given a K\"ahler-Ricci soliton with $\lambda = \pm 1$, let $\gamma_t$ be\\
the flow generated by the time dependent vector field
\begin{equation}\label{KRF4}\nonumber
Y_t := \frac 1{\lambda t}X.
\end{equation}
Then
\begin{equation}\label{KRF3}\nonumber
\omega_t := \lambda t \gamma_t^{\ast} \omega 
\end{equation}
is a K\"ahler-Ricci flow.
Notice that, when $\lambda =1$, 
the Ricci flow exists for $t > 0$
and that, when $\lambda =-1$, 
the Ricci flow exists for $t < 0$.
When $\lambda =0$  if we put
\begin{equation}\label{KRF5}\nonumber
\omega_t := \gamma_t^{\ast} \omega 
\end{equation}
where $\gamma_t$ is the flow generated by the vector field $X$,
then $\omega_t $ is a K\"ahler-Ricci flow.
\begin{theorem}[\cite{FutakiWang11}] Suppose $0<k<p$.
\begin{enumerate}
\item[(1)] There exists a shrinking soliton on $L^{-k} - \{\mathrm{zero\ section}\}$.
(The flow exists for $t < 0$.)
\item[(2)] There exists an expanding soliton on $L^{-k} - \{\mathrm{zero\ section}\}$.
(The Ricci flow exists for $t > 0$.)
\item[(3)] The first one and the second one can be pasted to form an eternal solution of the Ricci flow on
$(L^{-k} - \{\mathrm{zero\ section}\}) \times (-\infty, \infty)$.
\end{enumerate}
\end{theorem}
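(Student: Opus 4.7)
The plan is to construct shrinking and expanding K\"ahler-Ricci solitons on $L^{-k}-\{\mathrm{zero\ section}\}$ separately via the Calabi ansatz, then paste the two self-similar flows together through their common $t=0$ limit, a Ricci-flat K\"ahler cone metric. Because $M$ is toric Fano with $K_M=L^{-p}$, the pullback of $L^{k}$ to $L^{-k}-\{\mathrm{zero\ section}\}$ is trivialised by the tautological section, so the $k$-th power of the canonical bundle of $L^{-k}-\{\mathrm{zero\ section}\}$ is trivial. Consequently the unit circle bundle $N_k$ of $L^{-k}$ satisfies $c_1(D)=0$ and $c_1^B>0$ (inherited from $-K_M>0$ on $M$), and by Theorem \ref{Sasaki3} it admits a Sasaki-Einstein metric, producing a Ricci-flat K\"ahler cone metric $\bar\omega_0=\frac{\sqrt{-1}}{2}\partial\barpartial r^2$ on $C(N_k)=L^{-k}-\{\mathrm{zero\ section}\}$, where $r$ is the fibre norm. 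I would then look for K\"ahler-Ricci solitons of Calabi type
$$\omega=\frac{\sqrt{-1}}{2}\partial\barpartial F(r^2),$$
with soliton field $X$ a real multiple of the holomorphic Euler field $r\,\partial/\partial r$, which is complete and preserves $\omega$.

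Using the parallel holomorphic volume form on the Calabi-Yau cone (analogous to the explicit $\Omega$ of Section 5), the soliton equation $-\rho(\omega)=\lambda\omega+\mathcal L_X\omega$ reduces, after setting $s=\log r^2$ and introducing a momentum profile $\varphi(s)=F'(e^s)$, to an autonomous first-order ODE for $\varphi$. The coefficients of this ODE distribute the drift among $\lambda$, the coefficient of $\mathcal L_X\omega$, and the transverse Einstein constant in a way that depends explicitly on $k$ and $p$; the strict condition $0<k<p$ is precisely what guarantees the sign structure needed for a globally defined monotone convex solution. Treating the two signs of $\lambda$ separately gives a convex $F_-$ producing a gradient shrinking soliton $\omega_-$ (case $\lambda=1$) and a convex $F_+$ producing a gradient expanding soliton $\omega_+$ (case $\lambda=-1$); the asymptotics of $F_\pm$ as $r\to 0$ and $r\to\infty$ give completeness of the induced K\"ahler metrics on $L^{-k}-\{\mathrm{zero\ section}\}$ and establish statements (1) and (2).

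For statement (3) I would paste the two self-similar flows. By the rescaling construction recalled just before the theorem, $\omega_-$ yields a K\"ahler-Ricci flow on $(-\infty,0)$ and $\omega_+$ one on $(0,\infty)$. To merge them into an eternal solution, I would show that each rescaled family converges, as $t\to 0^\mp$ in the pointed Cheeger-Gromov sense on $L^{-k}-\{\mathrm{zero\ section}\}$, to the Ricci-flat K\"ahler cone $\bar\omega_0$; this reduces to comparing the self-similar profiles $F_\pm$ after rescaling with the cone profile $F_0(r^2)=r^2/2$. Smoothness of the pasted flow away from the zero section then follows from uniqueness and standard parabolic regularity for the K\"ahler-Ricci flow on complete manifolds.

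The principal obstacle is the global ODE analysis for $\varphi$: one must prove existence of smooth convex solutions on all of $s\in\R$ with the prescribed asymptotics at both $s\to\pm\infty$, and it is exactly here that the numerical hypothesis $0<k<p$ enters in an essential way (at either boundary of this range the relevant coefficient of the ODE degenerates and the convex solution either fails to exist globally or ceases to produce a complete K\"ahler metric). Once these asymptotics are in hand, the gluing at $t=0$ amounts to matching leading-order radial expansions against the cone profile and is comparatively routine.
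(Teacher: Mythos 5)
Your proposal follows essentially the same route as the paper (and the cited Futaki--Wang construction): obtain a possibly irregular toric Sasaki--Einstein metric on the relevant $U(1)$-bundle via Theorem \ref{Sasaki3}, run the Calabi ansatz $\widetilde{\omega}=p^{\ast}(\frac12 d\eta)+i\partial\barpartial f(r)$ over the resulting Ricci-flat K\"ahler cone to reduce the soliton equation to an ODE for the momentum profile (where $0<k<p$ governs the sign structure), and paste the shrinking and expanding self-similar flows at $t=0$ through the cone metric. Like the paper's own exposition, you defer the global ODE analysis to the reference, so there is nothing substantively different to flag.
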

\begin{remark}\label{eternal1}For $p < k$, there exists an expanding soliton on $(L^{-k} - \{\mathrm{zero\ section}\})$.
\end{remark}
For $M = \bfC\bfP^m$, $L^{-1} = \mathcal O(-1)$, 
(2) is due to H.-D. Cao, and extends to $\mathbb C^{m+1}$, and
(1) and Remark \ref{eternal1} are due to Feldman, Ilmanen and Knopf \cite{FIK}.
The solutions in (1) and Remark \ref{eternal1} due to \cite{FIK} extend to the zero section.
So, the solution exists on 
$$L^{-1}\times (-\infty,0)\ \cup\ ({\mathbb C}^{m+1}-\{{\bf o}\}) \times \{0\} 
\ \cup\ \mathbb C^{m+1} \times (0,\infty).$$

Now we explain the 
Calabi ansatz (momentum construction) in the following classical case. 
Let $(M,\omega)$ be a Fano K\"ahler-Einstein manifold, and 
search for a Ricci-flat K\"ahler metric on $K_M$.

Let $p : K_M \to M$ be the canonical line bundle, 
$h$ an Hermitian metric of $K_M$ such that 
$$ i\partial\barpartial \log h = \omega. $$
Define $r : K_M \to \bfR$ by $r(z) = \sqrt{h(z,z)}$.
Search for a K\"ahler metric of constant scalar curvature 
of the form $\tilde{\omega} = p^{\ast}\omega + i \partial\barpartial f(r)$ 
where $f(r)$ is a smooth function of $r$.
We obtain a 2nd order ODE in terms of 
$\varphi(\sigma) := f^{\prime\prime}(r)$
where $\sigma = f^\prime(r)$.
Although the equation was set up to find a constant scalar curvature K\"ahler metric, 
the metric obtained happens to be Ricci-flat. Hence we have obtained a Ricci-flat K\"ahler metric
except the zero section.
Next task is to find conditions so that the metric extends smoothly to the zero section.
The answer is the following: 
$$\varphi(0) = 0\ \quad \mathrm{and}\ \quad \varphi'(0) = 2.$$ 
Another task is to 
find conditions so that the metric becomes complete near $r = \infty$.
The answer in this case is the following: 
$$\varphi(r) = O(r^2)\ \quad \mathrm{as}\ \quad r \to \infty.$$
Instead of K\"ahler-Einstein manifolds, we use Sasaki-Einstein manifolds.
The same idea applies for the construction of gradient K\"ahler-Ricci solitons except the extension to
the zero section. The difficulty of the extension to the zero section arises when the Reeb vector field is
irregular. 
As a version of  Theorem \ref{slag4} we have the following. 
Suppose
\begin{enumerate}
\item[(i)]Suppose that $M$ is toric Fano, and take the K\"ahler class $[\omega] = c_1(M)$,
\item[(ii)]consider the Sasaki manifold $N \subset K_M$ to be the associated $U(1)$-bundle.
\end{enumerate}
Then $N$ admits a Sasaki-Einstein metric possibly with irregular Reeb vector field.

The Calabi ansatz for Sasakian manifold $N$ is described as 
$$p : C(N) = K_M - \{\mathrm{zero\ section}\} \to N,$$
$$\widetilde{\omega} = p^{\ast}\left(\frac12 d\eta\right) + i \partial \barpartial f(r).$$
Summary of this section is :
\begin{enumerate}
\item[1.] If $M$ is a toric Fano manifold then $U(1)$-bundle $N$ associated with $K_M$ is a 
toric Sasakian manifold.
\item[2.] By Futaki-Ono-Wang \cite{FOW} $N$ admits a possibly irregular Sasaki-Einstein metric.
\item[3.] We can apply Calabi's ansatz to get K\"ahler-Ricci solitons in $L^k$ outside the zero section for $L$ with $L^{-p} = K_M$
and $0<k<p$.
\end{enumerate}

\bibliographystyle{amsalpha}

\end{document}